\newtheorem{thm}{Theorem}
\newtheorem{lem}{Lemma}
\newtheorem{dfn}{Definition}
\newtheorem{prop}{Proposition}
\newtheorem{conj}{Conjecture}
\newtheorem{remark}{Remark}
\newcommand{\dx}{\mathrm{d}x}
\begin{document}

\title[Moments of Products of Elliptic Integrals]{Moments of Products of Elliptic Integrals}
\author[J. Wan]{James G. Wan}
\address{CARMA, University of Newcastle, Callaghan, NSW, 2308, Australia}
\email{james.wan@newcastle.edu.au}
\date{\today}
\maketitle

\begin{abstract}
We consider the moments of products of complete elliptic integrals of the first and second kinds. In particular, we derive new results using elementary means, aided by computer experimentation and a theorem of W. Zudilin. Diverse related evaluations, and two conjectures, are also given.
\end{abstract}

\medskip

\noindent{\footnotesize {\bf 2010 AMS Classification Numbers}: Primary 33C75; Secondary 33C20, 42A16, 60G50.} \\
\noindent{\footnotesize {\bf Keywords}: elliptic integrals, moments, generalised hypergeometric series.}

\medskip

\section{Motivation and general approach}

We study the complete elliptic integral of the first kind, $K(x)$, and the second kind, $E(x)$, defined by:
\begin{dfn}
\begin{equation}
K(x) = \frac{\pi}2 \, _2F_1\left( {{\frac12,\frac12}\atop{1}}\bigg| x^2\right), \quad
E(x) = \frac{\pi}2 \, _2F_1\left( {{-\frac12,\frac12}\atop{1}}\bigg| x^2\right).
\end{equation}
\end{dfn} 
As usual, $K'(x), E'(x)$ are used to denote the function value at the complementary modulus, $x' = \sqrt{1-x^2}$.
Recall that $_pF_q$ denotes the \textit{generalized hypergeometric series},
\begin{equation}
_pF_q\left( {{a_1, \ldots, a_p}\atop{b_1, \ldots, b_q}}\bigg| z \right)  = \sum_{n=0}^\infty \frac{(a_1)_n \cdots (a_p)_n}{(b_1)_n \cdots (b_q)_n} \frac{z^n}{n!}.
\end{equation}

The complete elliptic integrals, apart from their theoretical importance in arbitrary precision numerical computations (\cite{agm}) and the theory of theta functions, are also of significant interest in applied fields such as electrodynamics (\cite{wolf}), statistical mechanics, and random walks. Indeed, they were first used to provide explicit solutions to the perimeter of an ellipse (among other curves) as well as the (exact) period of an ideal pendulum.

The author was first drawn to the study of integral of products of $K$ and $E$ in \cite{walks2}, in which it is shown that 
\begin{equation} \label{1} 2 \int_0^1 K(x)^2 \, \dx = \int_0^1 K'(x)^2 \, \dx, \end{equation} by relating both sides to a moment of the distance from the origin in a four step uniform random walk on the plane. 

A much less recondite proof was only found later: set  $x=(1-t)/(1+t)$ on the left hand side of (\ref{1}), and apply the quadratic transform (\ref{k1}) below, and the result readily follows.

The four quadratic transforms (\cite{agm}), which we will use over and over again, are:
\begin{eqnarray}
K'(x) & = & \frac{2}{1+x} K\left(\frac{1-x}{1+x}\right) \label{k1} \\
K(x) & = & \frac{1}{1+x} K\left(\frac{2\sqrt{x}}{1+x}\right) \label{k2} \\
E'(x) & = & (1+x) E\left(\frac{1-x}{1+x}\right) - x K'(x) \label{e1} \\
E(x) & = & \frac{1+x}{2} E\left(\frac{2\sqrt x}{1+x}\right) + \frac{1-x^2}{2}K(x).  \label{e2}
\end{eqnarray}

In the following sections we will consider definite integrals involving products of $K, E, K', E'$, especially the moments of the products, with the intent of producing closed forms whenever possible.

The somewhat rich and unexpected results are perhaps surprisingly easy to discover, thanks to methods in experimental mathematics, for instance, the \textit{integer relations algorithm} PSLQ, the \textit{Inverse Symbolic Calculator} (ISC, now hosted at CARMA, \cite{isc}), the \textit{Online Encyclopedia of Integer Sequences} (OEIS, \cite{slo}), the \textit{Maple} package gfun, \textit{Gosper's algorithm} (which finds closed forms for indefinite sums of hypergeometric terms, \cite{ab}), and Sister Celine's method  (\cite{ab}).  Indeed, large scale computer experiments (\cite{combat})  revealed that there was a huge number of identities in the flavour of (\ref{1}). Once discovered, many results can be routinely established by the following elementary techniques:

\begin{enumerate}
\item Connections with and transforms of hypergeometric and Meijer G-functions (\cite{wolf}), as in the case of random walk integrals (Section \ref{two1}).
\item Interchange order of summation and integration (Section \ref{two2}).
\item Evaluate a closed form for the primitive (Section \ref{sporadic}).
\item Change the variable $x$ to $x'$ (Section \ref{sporadic}).
\item Change the variable followed by quadratic transforms (Section \ref{sporadic}).
\item Use a Fourier series (Section \ref{fs}).
\item Apply Legendre's relation (Section \ref{lege}).
\item Differentiate a product of elliptic integrals and integrate by parts (Section \ref{parts}).
\end{enumerate}

\section{One elliptic integral}

The moments of $K, E, K', E'$ are established, for instance, in \cite{agm}. Moreover, in \cite{catalan}, all moments of the \textit{generalized elliptic integrals} are similarly derived. Here we slightly generalize the results in \cite{agm}.

By a straight-forward application of the formulae (see \cite{catalan})

\[ \int_0^1x^{u-1}(1-x)^{v-1} \, _2F_1\left({{a,1-a}\atop{b}}\bigg|x\right) \, \dx = \frac{\Gamma(u)\Gamma(v)}{\Gamma(u+v)} \, _3F_2 \left({{a,1-a,u}\atop{b,u+v}}\bigg|1 \right), \]
\[ \int_0^1x^{u-1}(1-x)^{v-1} \, _2F_1\left({{a,-a}\atop{b}}\bigg|x\right) \, \dx = \frac{\Gamma(u)\Gamma(v)}{\Gamma(u+v)} \, _3F_2 \left({{a,-a,u}\atop{b,u+v}}\bigg|1 \right), \]
we produce:
\begin{equation} \label{s1}
 \int_0^1 x'^n x^m K(x) \, \dx = \frac{\pi}4  \frac{\Gamma(\frac12(m+1))\Gamma(\frac12(n+2))}{\Gamma(\frac12 (m+n+3))} \, _3F_2\left( {{\frac12, \frac12, \frac{m+1}2}\atop{1, \frac{m+n+3}2}}\bigg| 1 \right), 
\end{equation}
and
\begin{equation} \label{s2}
 \int_0^1 x'^n x^m E(x) \, \dx = \frac{\pi}4 \frac{\Gamma(\frac12(m+1))\Gamma(\frac12(n+2))}{\Gamma(\frac12 (m+n+3))} \, _3F_2\left( {{-\frac12, \frac12, \frac{m+1}2}\atop{1, \frac{m+n+3}2}}\bigg| 1 \right).
\end{equation}

These also encapsulate the moments for $K', E'$, because by $x \mapsto x'$, we have
\[ \int_0^1 x'^n x^m K'(x) \, \dx = \int_0^1 x^{n+1} x'^{m-1} K(x) \, \dx, \ \int_0^1 x'^n x^m E'(x) \, \dx = \int_0^1 x^{n+1} x'^{m-1} E(x) \, \dx. \]

We note that for convergence, $m>-1, n>-2$. When $m=1$ both formulae reduce to a $_2F_1$ and can be summed by Gauss' theorem (\cite{aar}).

If in addition $2m+n+1=0$ in (\ref{s1}), then Dixon's theorem (\cite{aar}) applies and we may sum the $_3F_2$ explicitly in terms of the $\Gamma$ function. For instance, we may compute $\int_0^1 K(x)/x' \, \dx$ (which also follows from the Fourier series in Section \ref{fs}). 

In (\ref{s2}), Dixon's theorem may only be applied to the single special case $\int_0^1 x'E(x) \, \dx$.

\section{One elliptic integral and one complementary elliptic integral} \label{two2}

Here we take advantage of the closed form for moments of $K', E'$ which  follow from (\ref{s1}) and (\ref{s2}):
\begin{eqnarray*}
\int_0^1 x^n K'(x) \, \dx & = & \frac{\pi \, \Gamma(\frac12(n+1))^2}{4 \, \Gamma(\frac12(n+2))^2}, \\
\int_0^1 x^n E'(x) \, \dx & = & \frac{\pi \, \Gamma(\frac12(n+3))^2}{2(n+1) \, \Gamma(\frac12(n+2))\Gamma(\frac12(n+4))}.
\end{eqnarray*}

We also use the (hypergeometric) series for $K, E$ which follow from the definitions:
\begin{eqnarray*}
K(x) & = & \sum_{k=0}^\infty  \frac{\Gamma(k+1/2)^2}{\Gamma(k+1)^2}\frac{x^{2k}}{2}, \\
E(x) & = & \sum_{k=0}^\infty -\frac{\Gamma(k-1/2)\Gamma(k+1/2)}{\Gamma(k+1)^2}\frac{x^{2k}}{4}.
\end{eqnarray*}

Hence, for the proposition below, we simply write the elliptic integral as a sum, and observe that the order of summation and integration may be interchanged.
\begin{prop} \label{prop1} We have the following moments:
\end{prop}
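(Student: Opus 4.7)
The plan is to implement exactly the strategy telegraphed just above the statement: expand one of the factors ($K$ or $E$) as its power series in $x^{2k}$, interchange summation and integration, and then invoke the closed-form moments
\[ \int_0^1 x^n K'(x) \, \dx = \frac{\pi\,\Gamma(\tfrac12(n+1))^2}{4\,\Gamma(\tfrac12(n+2))^2}, \qquad \int_0^1 x^n E'(x) \, \dx = \frac{\pi\,\Gamma(\tfrac12(n+3))^2}{2(n+1)\,\Gamma(\tfrac12(n+2))\Gamma(\tfrac12(n+4))} \]
that were just derived. The four expected evaluations $\int_0^1 x^n K K' \, \dx$, $\int_0^1 x^n E K' \, \dx$, $\int_0^1 x^n K E' \, \dx$, $\int_0^1 x^n E E' \, \dx$ all yield to the same template, so I would present the $KK'$ case in detail and simply indicate the modifications for the other three.

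Concretely, substituting $K(x)=\frac{\pi}{2}\sum_{k\ge 0}(1/2)_k^2/(k!)^2 \, x^{2k}$ and exchanging $\sum$ and $\int$ gives
\[ \int_0^1 x^n K(x) K'(x) \, \dx = \frac{\pi}{2}\sum_{k=0}^\infty \frac{(1/2)_k^2}{(k!)^2}\int_0^1 x^{n+2k} K'(x) \, \dx. \]
Plugging in the closed form for the inner integral, shifting $n \mapsto n+2k$ in its Gamma factors, and using $\Gamma(k+a)/\Gamma(a)=(a)_k$ to convert everything to Pochhammer symbols, the right-hand side collapses to the $_4F_3$ evaluation
\[ \frac{\pi^2}{8}\,\frac{\Gamma(\tfrac12(n+1))^2}{\Gamma(\tfrac12(n+2))^2}\;{}_4F_3\!\left({1/2,\,1/2,\,\tfrac12(n+1),\,\tfrac12(n+1)\atop 1,\,\tfrac12(n+2),\,\tfrac12(n+2)}\bigg|\,1\right). \]
The $EK'$ and $EE'$ moments arise by expanding $E$ instead, which replaces $(1/2)_k^2$ by $(-1/2)_k(1/2)_k$ (with an appropriate sign in the $k=0$ term handled by the identity $(-1/2)_k = -(1/2)_k/(2k-1)$); the $KE'$ and $EE'$ moments use the $E'$-moment formula above, which shifts the Gamma arguments by $1$ and so produces $_4F_3$'s with parameters differing by $1/2$. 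In every case the output is a terminating ratio of Gammas times a $_4F_3(1)$, and the answer can then be tidied into the form stated in the proposition.

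The only nontrivial point is justifying the interchange of $\sum$ and $\int$, since $K(x)$ has a logarithmic singularity at $x=1$ (where the series for $K$ also diverges) and $K'(x)$ has one at $x=0$. I would handle this by monotone/dominated convergence: every factor in the double sum is positive once one also expands $K'$ (or $E'$) in its hypergeometric-log series, so Tonelli applies and the rearrangement is automatic. A lighter alternative is to split the range at $1-\varepsilon$, use uniform convergence of $\sum (1/2)_k^2/(k!)^2 \, x^{2k}$ on $[0,1-\varepsilon]$, and bound the tail on $[1-\varepsilon,1]$ by the integrability of $x^n K(x) K'(x)$, then send $\varepsilon \to 0$. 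The remaining work — simplifying Gamma ratios and recognising the resulting $_4F_3$ — is routine and driven by the formulas already on the page, so I do not expect any genuine obstacle beyond this convergence bookkeeping.
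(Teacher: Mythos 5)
Your proposal is correct and is exactly the paper's argument: the paper proves Proposition \ref{prop1} by writing $K$ (or $E$) as its series in $x^{2k}$, interchanging summation and integration, and invoking the closed-form moments of $K'$ and $E'$ stated immediately beforehand, which is precisely your template (your Gamma-to-Pochhammer bookkeeping reproduces the stated $_4F_3$'s). The only difference is that you supply the convergence justification explicitly, whereas the paper merely observes that the interchange is permissible.
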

\begin{enumerate}

\item \begin{equation}\label{kk1} \int_0^1 x^n K(x)K'(x) \, \dx = \frac{\pi^2}8 \frac{\Gamma(\frac12(n+1))^2}{ \Gamma(\frac12(n+2))^2} \, _4F_3\left( {{\frac12,\frac12,\frac{n+1}2,\frac{n+1}2}\atop{1,\frac{n+2}2,\frac{n+2}2}}\bigg| 1\right). \end{equation}

In this case, odd $n$ yields a closed form in terms of $\pi^3$, and possibly a rational multiple of $\pi$, as we can expand the summand (which is a rational function) as a partial fraction much like in Remark \ref{partfrac}. Amazingly, all the rational multiples of $\pi$ vanish as we show in Lemma \ref{pi3}.

\item \begin{equation} \int_0^1 x^n E(x)K'(x) \, \dx = \frac{\pi^2}8 \frac{ \Gamma(\frac12(n+1))^2}{ \Gamma(\frac12(n+2))^2} \, _4F_3\left( {{-\frac12,\frac12,\frac{n+1}2,\frac{n+1}2}\atop{1,\frac{n+2}2,\frac{n+2}2}}\bigg| 1\right). \end{equation}

In this case, odd $n$ yields a closed form in terms of $\pi$ and $\pi^3$. From Lemma \ref{pi3} and (\ref{leg}), the constant of the $\pi$ term is $\frac{\pi}{4(n+1)}$.

\item \begin{equation} \int_0^1 x^n K(x)E'(x) \, \dx = \frac{\pi^2}8 \frac{(n+1) \Gamma(\frac12(n+1))^2}{(n+2) \Gamma(\frac12(n+2))^2} \, _4F_3\left( {{\frac12,\frac12,\frac{n+1}2,\frac{n+3}2}\atop{1,\frac{n+2}2,\frac{n+4}2}}\bigg| 1\right). \end{equation}

In this case, odd $n$ yields a closed form in terms of $\pi$ and $\pi^3$. From Lemma \ref{pi3} and (\ref{leg}), the constant of the $\pi$ term is $\frac{\pi}{4(n+1)}$.

\item \begin{equation} \int_0^1 x^n E(x)E'(x) \, \dx = \frac{\pi^2}8 \frac{(n+1) \Gamma(\frac12(n+1))^2}{(n+2) \Gamma(\frac12(n+2))^2} \, _4F_3\left( {{-\frac12,\frac12,\frac{n+1}2,\frac{n+3}2}\atop{1,\frac{n+2}2,\frac{n+4}2}}\bigg| 1\right). \end{equation}

In this case, odd $n$ yields a closed form in terms of $\pi$ and $\pi^3$. By using the derivative of $(1-x^2)x^{2n}E(x)E'(x)$ and the linear terms above, we deduce that the linear term here is also $\frac{\pi}{4(n+1)}$.

\end{enumerate}

We observe that $E', K'$ do not have `nice' expansions around the origin, and $E, K$ do not have `nice' moments, so the method above cannot to extended to evaluate moments of other products.

Note that by using the symmetry between parts 2 and 3, as well as by applying Legendre's relation (\ref{legendre}), we obtain linear identities connecting these $_4F_3$'s.

We now prove the observation in part 1 of Proposition \ref{prop1}.

\begin{lem} \label{pi3} When $n$ is odd, the $n$th moment of $K(x)K'(x)$ is a rational multiple of $\pi^3$. 
\end{lem}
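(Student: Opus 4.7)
The plan is to show that the ${}_4F_3(1)$ appearing in Proposition~\ref{prop1}(1) evaluates, for $n=2m+1$, to a rational multiple of $\pi^2$; call that value $S_m$. Combined with the Gamma prefactor $\frac{\pi^2}{8}(m!)^2/\Gamma(m+\frac{3}{2})^2$, which equals $\pi\cdot 16^m (m!)^4/\bigl(2((2m+1)!)^2\bigr)$ via the standard evaluation $\Gamma(m+\frac{3}{2})=(2m+1)!\sqrt\pi/(2\cdot 4^m m!)$ and is hence $\pi$ times a rational, this delivers the desired $\pi^3$-rationality. Writing out the summand of the ${}_4F_3$ via the Gamma evaluations, the $k$th term extends to the rational function
\[
T(s) \;=\; \frac{\bigl((2m+1)!\bigr)^2}{4^m(m!)^4}\cdot\frac{\prod_{j=1}^m(s+j)^2}{\prod_{j=0}^m(2s+2j+1)^2},
\]
whose only poles are double poles at the half-integers $s_j=-j-\frac{1}{2}$, $j=0,1,\ldots,m$, and which satisfies $T(s)=O(|s|^{-2})$ at infinity.

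Two features of $T$ are decisive. First, the numerator forces $T(k)=0$ at each $k\in\{-1,-2,\ldots,-m\}$. Second, the substitution $s\mapsto -s-m-1$ reverses the order of the factors in both products, giving the palindromic identity $T(-s-m-1)=T(s)$. Consequently
\[
\sum_{k\in\mathbb Z}T(k) \;=\; \sum_{k\ge 0}T(k) \,+\, 0 \,+\, \sum_{k\ge 0}T(-k-m-1) \;=\; 2\,S_m,
\]
i.e.\ the two-sided sum is exactly twice our target.

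Decompose $T(s)=\sum_{j=0}^m A_j/(2s+2j+1)^2+\sum_{j=0}^m B_j/(2s+2j+1)$ into partial fractions with rational $A_j,B_j$; the palindromic symmetry yields $A_{m-j}=A_j$ and $B_{m-j}=-B_j$, so $\sum_j B_j=0$. Summing over $k\in\mathbb Z$ term-by-term, I would invoke the classical identity $\sum_{k\in\mathbb Z}1/(2k+a)^2=\pi^2/4$ for odd integers $a$ (a rescaling of $\sum_k 1/(k+z)^2=\pi^2/\sin^2(\pi z)$), and the principal-value $\sum_{k\in\mathbb Z}1/(2k+a)=0$. The condition $\sum_j B_j=0$ justifies the interchange for the simple-pole piece through the absolutely convergent rewriting $\sum_j B_j/(2s+2j+1)=\sum_j B_j\bigl(1/(2s+2j+1)-1/(2s+1)\bigr)$. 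This gives $2\,S_m=(\pi^2/4)\sum_j A_j$, so $S_m\in\mathbb Q\cdot\pi^2$, completing the argument. The main obstacle is verifying the palindrome $T(-s-m-1)=T(s)$---a short but essential calculation on which the doubling of the sum depends. Equivalently, one can bypass both reciprocal sums by the Cauchy formula $\sum_{k\in\mathbb Z}f(k)=-\sum\mathrm{Res}(\pi\cot(\pi s)f(s))$: since $\pi\cot(\pi s)$ vanishes to first order at each half-integer $s_j$, its residue against $T$ is exactly $-\pi^2 A_j/4$, again yielding $\sum_{k\in\mathbb Z}T(k)=(\pi^2/4)\sum_j A_j$.
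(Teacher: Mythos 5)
Your proof is correct, but it takes a genuinely different route from the paper. The paper proves Lemma~\ref{pi3} by exhibiting the homogeneous three-term contiguous recurrence $2n^3g(n+1)-(2n-1)(2n^2-2n+1)g(n)+2(n-1)^3g(n-1)=0$ for $g(n)=\int_0^1x^{2n-1}K(x)K'(x)\,\dx$, certifying it with Gosper's algorithm, and then checking two initial values; the $\pi^3$-rationality propagates by induction. You instead evaluate each odd moment directly: you observe that the summand of the ${}_4F_3$ in Proposition~\ref{prop1}(1) extends to a rational function $T(s)$ with only double poles at half-integers, decaying like $s^{-2}$, vanishing at $s=-1,\dots,-m$, and palindromic under $s\mapsto -s-m-1$ (a one-line check: $s+j\mapsto -(s+m+1-j)$ and $2s+2j+1\mapsto -(2s+2(m-j)+1)$, and everything is squared), so the one-sided sum is half the bilateral sum, which by partial fractions equals $\tfrac{\pi^2}{4}\sum_jA_j$ with the simple-pole contributions cancelling because $B_{m-j}=-B_j$. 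This is close in spirit to the partial-fraction argument the paper sketches in Remark~\ref{partfrac} for the $\zeta(3)$ results, but there the paper only obtains ``$\pi^3$ plus possibly a rational multiple of $\pi$'' from the one-sided sum and then invokes Lemma~\ref{pi3} (proved by recurrence) to kill the $\pi$ term; your symmetrization explains structurally why that term vanishes. The trade-off: the paper's recurrence proof is shorter given computer algebra and also records the recursion itself (useful elsewhere in the paper), whereas your argument is uniform in $m$, requires no initial-value computations or experimentally discovered relation, and produces the explicit rational constant $\tfrac18\sum_jA_j$ rather than merely its rationality.
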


\begin{proof}We experimentally discovered that, letting $g(n) := \int_0^1 x^{2n-1} K(x)K'(x) \, \dx$, we have
\[ 2n^3 g(n+1) - (2n-1)(2n^2-2n+1)g(n)+2(n-1)^3g(n-1)=0. \]
This (contiguous) relation, once discovered, can be proven by extracting the summand, simplifying and summing using Gosper's algorithm. Thus, after computing 2 starting values, the claim is proven. Note that the recursion also holds when $n$ is not an integer.
\end{proof}

Experimentally, we found that the sequence $h(n) := \pi^3 16^{n+1} g(n+1)$ matched entry \texttt{A036917} of the Online Encyclopedia of Integer Sequences; indeed, they share the same recursion and initial values. Moreover, the OEIS provides that 
\begin{equation} \label{altf}
h(n) = \sum_{k=0}^n \binom{2n-2k}{n-k}^2 \binom{2k}{k}^2 = \frac{16^n \Gamma(n+1/2)^2}{\pi \Gamma(n+1)^2} \, _4F_3\left( {{-n,-n,\frac12,\frac12}\atop{\frac12-n,\frac12-n,1}}\bigg| 1\right).
\end{equation}
The first equality is routine to check as we can  produce a recurrence for the binomial sum -- for instance, using Sister Celine's method; the second equality is notational.

The generating function for $h(n)$ is simply
\[ \sum_{n=0}^\infty h(n)t^n = \frac{4}{\pi^2} K(4\sqrt{t})^2, \]
which is again easy to prove using the series for $K(t)$. Recall that $h(n)$ is related to the moments of $K(x)K'(x)$, and thus we have:
\begin{thm}
\begin{equation}\label{gf}
\int_0^1 \frac{x}{1-t^2x^2} K(x)K'(x) \, \dx = \frac{\pi}4 K(t)^2.
\end{equation}
\end{thm}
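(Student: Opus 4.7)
The plan is to reduce the claim to the closed form for the odd moments of $K(x)K'(x)$ established in Lemma~\ref{pi3}, and then to resum the resulting power series as the Cauchy square of the standard hypergeometric series for $K$.

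For $|t|<1$, expand the kernel as the geometric series $\frac{x}{1-t^2x^2}=\sum_{n\ge 0} x^{2n+1}t^{2n}$, which converges uniformly on $[0,1]$. Since $K(x)K'(x)$ has only logarithmic endpoint singularities and is therefore integrable, termwise integration is legitimate and yields
\begin{equation*}
\int_0^1\frac{x}{1-t^2x^2}K(x)K'(x)\,\dx=\sum_{n=0}^\infty g(n+1)\,t^{2n}.
\end{equation*}
By Lemma~\ref{pi3} together with the identification in \eqref{altf}, the moments admit the closed form $g(n+1)=\pi^3 h(n)/16^{n+1}$, where $h(n)=\sum_{k=0}^n\binom{2n-2k}{n-k}^{2}\binom{2k}{k}^{2}$ is the OEIS sequence \texttt{A036917}.

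It then remains to evaluate the ordinary generating function of $h$. Starting from $K(u)=\frac{\pi}{2}\sum_{k\ge 0}\binom{2k}{k}^{2}u^{2k}/16^{k}$, the substitution $u=4\sqrt{s}$ produces the tidy form $K(4\sqrt{s})=\frac{\pi}{2}\sum_{k\ge 0}\binom{2k}{k}^{2}s^{k}$, and a direct Cauchy product on this series reproduces exactly the binomial sum that defines $h$. Hence $\sum_{n\ge 0}h(n)\,s^n=\frac{4}{\pi^2}K(4\sqrt{s})^2$. Specialising at $s=t^2/16$ (so that $4\sqrt{s}=t$) and inserting into the previous display gives
\begin{equation*}
\sum_{n=0}^\infty g(n+1)\,t^{2n}=\frac{\pi^3}{16}\cdot\frac{4}{\pi^2}K(t)^2=\frac{\pi}{4}K(t)^2,
\end{equation*}
which is the desired identity.

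All of the real difficulty has been absorbed upstream, in Lemma~\ref{pi3} (the Gosper-certified recursion together with two initial values) and the routine Sister Celine verification that the binomial sum for $h(n)$ obeys the same recursion. Once those are in hand the present theorem is essentially formal: the only analytic ingredient is the harmless termwise integration, and the only algebraic ingredient is the Cauchy square of the familiar series for $K$.
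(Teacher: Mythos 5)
Your argument is exactly the paper's: expand the kernel geometrically, integrate termwise to get $\sum_n g(n+1)t^{2n}$, identify $g(n+1)=\pi^3 h(n)/16^{n+1}$ with \texttt{A036917} via the shared Gosper/Sister Celine recursion and initial values, and resum using $\sum_n h(n)s^n=\frac{4}{\pi^2}K(4\sqrt{s})^2$ at $s=t^2/16$. The proof is correct, and you have in fact spelled out the Cauchy-square and substitution steps that the paper leaves implicit.
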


Equation (\ref{gf}) seems to be a remarkable extension of its (much easier) cousins,
\begin{equation} \label{gf2}
\int_0^1 \frac{1}{1-t^2x^2} K'(x) \, \dx = \frac{\pi}2 K(t), \quad \int_0^1 \frac{1}{1-t^2x^2} E'(x) \, \dx = \frac{\pi}{2t^2} (K(t)-E(t)).
\end{equation}

Note that manipulations of (\ref{gf}, \ref{gf2}) give myriads of integrals, we list some of which below (below $G$ denotes \textit{Catalan's constant}):
\begin{eqnarray*}
\int_0^1 \frac{\arctan(x)}{x} K'(x) \, \dx & = & \pi G, \\
\int_0^1 \frac{2}{x} K(x)K'(x)(K(x)-E(x)) \, \dx & = & \int_0^1 K(x)^2E'(x) \, \dx, \\
\int_0^1 \,_2F_1\left({{1,\frac{n+1}2}\atop{\frac{n+3}2}}\bigg|x^2\right) \,xK(x)K'(x) \, \dx & = & \frac{(n+1)\pi}{4} \int_0^1 t^n K(t)^2 \, \mathrm{d}t. \\
\end{eqnarray*}
The last identity specialises to 
\[ \int_0^1 \frac{-\log(1-x^2)}{x}K(x)K'(x) \, \dx =  \frac78 \pi \zeta(3). \]

\section{Two complementary elliptic integrals} \label{two1}

In \cite{zud}, Zudilin's Theorem connects, as a special case, triple integrals of rational functions over the unit cube with generalized hypergeometric functions ($_7F_6$'s). We state a restricted form of the theorem which is sufficient for our purposes here: 

\begin{thm}[Zudilin] \label{zudilin} Given $h_0, \ldots, h_5$ for which both sides converge,

\begin{eqnarray}
& &  \int_0^1 \int_0^1 \int_0^1 \frac{x^{h_2-1}y^{h_3-1}z^{h_4-1}(1-x)^{h_0-h_2-h_3} (1-y)^{h_0-h_3-h_4} (1-z)^{h_0-h_4-h_5}}{(1-x(1-y(1-z)))^{h_1}} \, \mathrm{d}x \, \mathrm{d}y \, \mathrm{d}z \nonumber \\
& = & \frac{\Gamma(h_0+1)\prod_{j=2}^4 \Gamma(h_j) \prod_{j=1}^4 \Gamma(h_0+1-h_j-h_{j+1})}{\prod_{j=1}^5 \Gamma(h_0+1-h_j)}\times \nonumber \\
& & _7F_6\left( {{h_0,1+h_0/2,h_1,h_2,h_3,h_4,h_5}\atop{h_0/2,1+h_0-h_1,1+h_0-h_2,1+h_0-h_3,1+h_0-h4,h+h_0-h_5}}\bigg| 1 \right).
\end{eqnarray}
\end{thm}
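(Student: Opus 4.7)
The natural plan is to expand the kernel $(1-x(1-y(1-z)))^{-h_1}$ as a power series, integrate termwise using Euler's Beta integral, and identify the resulting double series as the stated very-well-poised $_7F_6$. The first step is to write $(1-x(1-y(1-z)))^{-h_1}=\sum_{n\geq 0}(h_1)_n x^n(1-y(1-z))^n/n!$ and then expand the polynomial $(1-y(1-z))^n=\sum_{k=0}^n\binom{n}{k}(-1)^k y^k(1-z)^k$ binomially, so that the integrand becomes a double series whose generic term is separable in $x$, $y$, $z$.

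Each single-variable integration is then a Beta integral: the $x$-integral yields $B(n+h_2,\,1+h_0-h_2-h_3)$, the $y$-integral yields $B(k+h_3,\,1+h_0-h_3-h_4)$, and the $z$-integral yields $B(h_4,\,1+k+h_0-h_4-h_5)$. Converting these ratios of $\Gamma$'s into Pochhammer symbols and pulling out all $k$- and $n$-independent $\Gamma$-factors produces part of the prefactor stated in the theorem, multiplied by the double sum
\[
\sum_{n\geq 0}\frac{(h_1)_n (h_2)_n}{(1+h_0-h_3)_n\, n!}\sum_{k=0}^{n}\frac{(-n)_k (h_3)_k (1+h_0-h_4-h_5)_k}{(1+h_0-h_4)_k (1+h_0-h_5)_k\, k!}.
\]

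The crux is then a purely hypergeometric identification: one must show that this double sum equals the remaining $\Gamma$-ratio $\Gamma(h_0+1)\Gamma(h_0+1-h_1-h_2)/[\Gamma(h_0+1-h_1)\Gamma(h_0+1-h_2)]$ times the advertised very-well-poised $_7F_6$. My plan here is to apply a Whipple or Thomae transformation to the inner (terminating) $_3F_2(1)$ and then reverse the order of summation so that $k$ becomes the outer index; the Dougall-type doubling factor $(1+h_0/2)_k/(h_0/2)_k=(h_0+2k)/h_0$ characteristic of a very-well-poised series should emerge from combining the Pochhammer symbols produced by the transformation.

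This last identification is the main obstacle: arranging the Pochhammer bookkeeping correctly for fully general parameters $(h_0,\ldots,h_5)$ is delicate, and is essentially the kind of manipulation underlying Bailey's transform and Whipple's classical formulae for well-poised series. In practice I would verify the resulting identity by comparing term-ratios in the single-sum index after one initial value is matched, if necessary with the aid of an automated tool such as Zeilberger's algorithm, and then invoke analytic continuation in the $h_j$ to extend the conclusion to the full convergence range claimed in the theorem.
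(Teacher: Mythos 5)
The paper offers no proof of this statement: it is quoted (in restricted form) from Zudilin's paper \cite{zud}, so there is no internal argument to compare yours against, and a self-contained proof would genuinely add something. Your first half is correct: expanding the kernel, expanding $(1-y(1-z))^n$ binomially, and evaluating the three Beta integrals does produce the three $\Gamma$-ratios you list, and the residual factor $\Gamma(h_0+1)\Gamma(h_0+1-h_1-h_2)/[\Gamma(h_0+1-h_1)\Gamma(h_0+1-h_2)]$ that the double sum must still supply is also bookkept correctly (termwise integration is legitimate on an open set of parameters, with analytic continuation covering the rest).

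The genuine gap is the step you yourself flag as the ``main obstacle'': identifying
\[
\sum_{n\geq 0}\frac{(h_1)_n (h_2)_n}{(1+h_0-h_3)_n\, n!}\;
{}_3F_2\left({{-n,\ h_3,\ 1+h_0-h_4-h_5}\atop{1+h_0-h_4,\ 1+h_0-h_5}}\bigg|1\right)
\]
with the very-well-poised $_7F_6$. This is not a finishing detail --- it is essentially the entire content of the theorem --- and none of the devices you name can close it as described. The inner terminating $_3F_2$ is neither Saalsch\"utzian nor well-poised for generic $h_j$, so no classical summation applies; a Thomae or Whipple transformation followed by reversing the order of summation leaves you with another double sum, which must then be collapsed by a further (Bailey-transform type) step before any very-well-poised structure appears. ``Comparing term ratios after matching one initial value'' is unavailable precisely because your object is a double sum in $(n,k)$ rather than a single hypergeometric series, and Zeilberger's algorithm certifies recurrences for specific parameter values, not an identity in five free continuous parameters. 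To turn this outline into a proof you must either carry out the transformation chain explicitly (the relevant ingredients are a two-term Thomae relation for the terminating $_3F_2$ together with Whipple's transformation between a very-well-poised $_7F_6(1)$ and a Saalsch\"utzian $_4F_3(1)$), or do what the paper does and cite Zudilin's result directly.
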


In \cite{walks2}, this theorem is applied to derive closed form evaluations for moments of random walks from their integral representations. 

The idea here is to work backwards, writing a single integral involving products of elliptic integrals as a double, then a triple integral of the required form, and then apply Theorem \ref{zudilin}.  To do so, we require the following formulae, which can be readily verified (see e.g. \cite{prud}):

\begin{eqnarray}
\int_0^1 \frac{\dx}{\sqrt{x(1-x)(a-x)}} & = & \frac{2}{\sqrt{a}} K\left(\frac{1}{\sqrt{a}}\right), \\
\int_0^1 \sqrt{\frac{a-x}{x(1-x)}} \dx & = & 2\sqrt{a} \ E\left(\frac{1}{\sqrt{a}}\right), \label{eb}\\
\int_a^1 \frac{\mathrm{d}y}{\sqrt{y(1-y)(y-a)}} & = & 2 K'(\sqrt a), \\
\int_a^1 \frac{\sqrt{y}}{\sqrt{(1-y)(y-a)}} \, \mathrm{d} y& = & 2 E'(\sqrt a). \label{ea}
\end{eqnarray}

Though the simple cases in this section (corresponding to $n=0$) are tabulated in \cite{handbook}, the general results appear to be new.

Using the above relations, we have, for instance,
\begin{eqnarray*}
\int_0^1 E'(y)^2 \, dy & = &  \frac12 \int_0^1 \int_{a^2}^1 \sqrt{\frac{y}{(1-y)(y-a^2)}} E(\sqrt{1-a^2}) \, \mathrm{d}y \mathrm{d}a \\
 & = & \frac14 \int_0^1 \int_0^1 \sqrt{\frac{y}{(1-y)z(1-z)}} E(\sqrt{1-yz}) \, \mathrm{d}y\mathrm{d}z \\
 & = & \frac18 \int_0^1 \int_0^1 \int_0^1 \sqrt{\frac{y(1-yz)}{(1-y)z(1-z)}}\sqrt{\frac{\frac{1}{1-yz}-x}{x(1-x)}} \, \dx \mathrm{d} y \mathrm{d} z \\
 & = & \frac18 \int_0^1 \int_0^1 \int_0^1 \sqrt{\frac{y(1-x(1-y(1-z)))}{x(1-x)(1-y)z(1-z)}} \, \dx \mathrm{d} y \mathrm{d} z.\\
\end{eqnarray*}

The first equality follows from (\ref{ea}), the second from the change of variable $a^2 \mapsto y z$, the third from (\ref{eb}),  and the fourth from $z \mapsto 1-z$. Now Theorem \ref{zudilin} applies to the last integral. This strategy works for the rest of this section, so we are led to:

\begin{prop} For all $n$ where the integral converges,
\end{prop}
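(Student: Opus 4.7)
The plan is to mimic the computation displayed just before the proposition for $\int_0^1 E'(y)^2\,\mathrm{d}y$ (which is the $n=0$, $E'^2$ case), but with a factor $y^n$ carried through from the outset, and to perform the analogous manipulations for the other two products $K'(y)^2$ and $K'(y)E'(y)$. For each case I would (i) use (\ref{ea}) or its $K'$ analogue to replace one of the two complementary elliptic integrals by an integral over $[a^2,1]$; (ii) substitute $a^2\mapsto yz$ to collapse the triangular region into the unit square, so that $a^n\,\mathrm{d}a$ contributes (after its Jacobian is folded into the existing radicand) a multiplicative factor of $y^{(n+1)/2}z^{(n-1)/2}\,\mathrm{d}z$; (iii) apply (\ref{eb}) or the analogous $K$-formula to replace the remaining $E(\sqrt{1-yz})$ or $K(\sqrt{1-yz})$ by an integral in a third variable $x$; and (iv) perform $z\mapsto 1-z$ so that the denominator takes the shape $(1-x(1-y(1-z)))^{-h_1}$ required by Theorem \ref{zudilin}.

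Once the triple integral is in Zudilin's standard form, I would match the exponents on $x,\,1-x,\,y,\,1-y,\,z,\,1-z$ and on $1-x(1-y(1-z))$ against those in the integrand of Theorem \ref{zudilin}, solving a linear system for $h_0,\ldots,h_5$ as explicit affine functions of $n$. Substituting these parameters into the right-hand side of the theorem yields the stated $_7F_6$ together with its $\Gamma$-prefactor. The three products $K'^2$, $K'E'$, $E'^2$ differ only in which formulae are chosen at steps (i) and (iii), and correspondingly in the $\pm 1/2$ shifts to $h_1$, $h_3$, $h_5$ coming from $K$ vs.\ $E$.

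The main obstacle is parameter bookkeeping rather than any new idea: the exponents of $1-x$, $1-y$, $1-z$ force three linear relations among $h_0,\ldots,h_5$, and one must verify that the factor $y^{(n+1)/2}(1-z)^{(n-1)/2}$ arising at step (iv) distributes consistently between $h_3$ and $h_0-h_4-h_5$. A secondary but necessary check is to pin down the range of $n$ for which the original moment and each of the three intermediate integrals converges, which is exactly the qualification "for all $n$ where the integral converges" in the statement.
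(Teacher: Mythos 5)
Your proposal is correct and follows exactly the paper's own route: the paper derives each of these propositions by writing the product of complementary elliptic integrals as a triple integral via (\ref{ea})/(\ref{eb}) and their $K$-analogues, substituting $a^2\mapsto yz$ and $z\mapsto 1-z$ to reach the standard form, and then reading off the parameters in Theorem \ref{zudilin}; carrying the extra factor $x^n$ through is indeed only a matter of bookkeeping, and your computed factor $y^{(n+1)/2}z^{(n-1)/2}$ is consistent with the displayed $n=0$ case. No substantive difference from the paper's argument.
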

\begin{equation} \int_0^1 x^n E'(x)^2 \, \dx = \frac{2^{4n}(n+1)^3(n+3)^2}{16(n+2)^3(n+4)}\frac{\Gamma\left(\frac12(n+1)\right)^8}{\Gamma(n+1)^4} \, _7F_6\left({ {-\frac12, \frac12, \frac12, \frac32, \frac{n+3}2, \frac{n+3}2, \frac{n+7}4}\atop{1, \frac{n+3}4, \frac{n+2}2, \frac{n+4}2, \frac{n+4}2, \frac{n+6}2} } \bigg| 1\right). \end{equation}

When $n$ is odd, the hypergeometric reduces to known constants only involving $\zeta(3)$, which we prove in Theorem \ref{zeta3} below.

Similarly, by building up the $E'$ integral then  $K'$, we obtain:

\[ \int_0^1 E'(x)K'(x) \, \dx = \frac18 \int_0^1 \int_0^1 \int_0^1 \sqrt{\frac{1-x(1-y(1-z))}{x(1-x)y(1-y)z(1-z)}} \, \dx \mathrm{d} y \mathrm{d} z, \]

Hence, we have:

\begin{prop} For all $n$ where the integral converges, \end{prop}

\begin{equation} \int_0^1 x^n E'(x)K'(x) \, \dx = \frac{2^{4n}(n+1)^2}{16(n+2)}\frac{\Gamma\left(\frac12(n+1)\right)^8}{\Gamma(n+1)^4} \, _7F_6\left({ {-\frac12, \frac12, \frac12, \frac12, \frac{n+1}2, \frac{n+1}2, \frac{n+5}4}\atop{1, \frac{n+1}4, \frac{n+2}2, \frac{n+2}2, \frac{n+2}2, \frac{n+4}2} } \bigg| 1\right).\end{equation}

Alternatively,  by building up the $K'$ integral then $E'$, we obtain:

\[ \int_0^1 E'(x)K'(x) \, \dx = \frac18 \int_0^1 \int_0^1 \int_0^1 \frac{\sqrt y}{\sqrt{x(1-x)(1-y)z(1-z)(1-x(1-y(1-z)))}} \, \dx \mathrm{d} y \mathrm{d} z, \]

So the next proposition allows us to produce some equalities for the $_7F_6$'s involved:

\begin{prop} For all $n$ where the integral converges, \end{prop}

\begin{equation} \int_0^1 x^n E'(x)K'(x) \, \dx = \frac{2^{4n}(n+1)^3(n+3)}{16(n+2)^3}\frac{\Gamma\left(\frac12(n+1)\right)^8}{\Gamma(n+1)^4} \, _7F_6\left({ {\frac12, \frac12, \frac12, \frac32, \frac{n+3}2, \frac{n+3}2, \frac{n+7}4}\atop{1, \frac{n+3}4, \frac{n+2}2, \frac{n+4}2, \frac{n+4}2, \frac{n+4}2} } \bigg| 1\right).\end{equation}

Finally, we also have

\[ \int_0^1 K'(x)^2 \, \dx = \frac18 \int_0^1 \int_0^1 \int_0^1 \frac{1}{\sqrt{x(1-x)y(1-y)z(1-z)(1-x(1-y(1-z)))}} \, \dx \mathrm{d} y \mathrm{d} z, \]

Therefore:

\begin{prop} For all $n$ where the integral converges, \end{prop}

\begin{equation} \int_0^1 x^n K'(x)^2 \, \dx = \frac{2^{4n}(n+1)}{16}\frac{\Gamma\left(\frac12(n+1)\right)^8}{\Gamma(n+1)^4} \, _7F_6\left({ {\frac12, \frac12, \frac12, \frac12, \frac{n+1}2, \frac{n+1}2, \frac{n+5}4}\atop{1, \frac{n+1}4, \frac{n+2}2, \frac{n+2}2, \frac{n+2}2, \frac{n+2}2} } \bigg| 1\right).\end{equation}

Again, the odd moments are computable in terms of $\zeta(3)$, with the particularly simple 
\begin{equation} \int_0^1 x K'(x)^2 \dx = \frac{7}{4} \zeta(3). \end{equation}

\begin{remark} \label{oddr} \rm{Therefore, all the odd moments of $K'^2,E'^2, K'E'$ have particularly simple forms. By using (\ref{gen}), we can iteratively obtain \emph{all the odd moments} of $K^2, E^2, KE$. As an example, $\int_0^1 x^3 K(x)^2 \, \dx = \frac18(2+7\zeta(3))$.} \end{remark}

Now we prove the observation regarding the appearance of $\zeta(3)$:

\begin{thm} \label{zeta3} When $n$ is odd, the $n$th moment of $K'^2, E'^2, K'E', K^2, E^2$ and $KE$ is expressible as $a+b\zeta(3)$, where $a,b \in \mathbb{Q}$.
\end{thm}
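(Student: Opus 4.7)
The plan splits into two parts, paralleling Remark \ref{oddr}. First, I would establish the theorem for the primed products $K'^2, E'^2, K'E'$ directly from the $_7F_6$ representations derived earlier in this section. Second, I would invoke Remark \ref{oddr}: the relation (\ref{gen}) expresses each odd unprimed moment as a $\mathbb{Q}$-linear combination of primed odd moments and rational constants, so closure of $\mathbb{Q}+\mathbb{Q}\zeta(3)$ transports the result to $K^2, E^2, KE$ at no extra cost.

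For the three primed products I would follow the strategy of Lemma \ref{pi3}. Writing
$$g_\bullet(m) := \int_0^1 x^{2m+1} \bullet(x)\,\dx, \qquad \bullet \in \{K'^2,\, E'^2,\, K'E'\},$$
each $g_\bullet(m)$ is a known constant times a specific $_7F_6(1)$ evaluated at $n=2m+1$. Applying Zeilberger's creative telescoping to the underlying hypergeometric summand, I would produce a three-term polynomial recurrence in $m$ with rational coefficients for each sequence. Once guessed by a computer search, each recurrence is verified rigorously by running Gosper's algorithm on the certifying summand, exactly as in the proof of Lemma \ref{pi3}.

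Two base values per sequence then complete an induction: a rational-coefficient recurrence with base data in $\mathbb{Q}+\mathbb{Q}\zeta(3)$ forces every subsequent term into that same space. The first value $g_{K'^2}(0) = \frac{7}{4}\zeta(3)$ is already stated above. The remaining five base values would come either from directly specialising the triple-integral representations to small odd $n$, where the extra factors of $x$ simplify the integrand and reduce the computation to two-dimensional Ap\'ery-type integrals, or (more efficiently) from running the recurrences backwards one step from a second explicit evaluation. Any purely rational contribution to $g_{E'^2}(0)$ and $g_{K'E'}(0)$ arises from the residual polynomial pieces left under the square roots after those simplifications.

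The principal obstacle is securing a single genuinely nontrivial base case, namely $\int_0^1 xK'(x)^2 \, \dx = \frac{7}{4}\zeta(3)$, without circularity. A direct route is to specialise the triple integral for $K'^2$ to $n=1$ and recognise the resulting two-dimensional integral as one of the standard Ap\'ery representations of $\zeta(3)$; alternatively, the corresponding $_7F_6(1)$ at $n=1$ is a specialisation that can be matched against the Wilf--Zeilberger catalogue of $\zeta(3)$-evaluations. Once this single identity is in hand, the recurrences propagate the conclusion through all odd $m$ for each primed product, and Remark \ref{oddr} with relation (\ref{gen}) finishes the unprimed cases.
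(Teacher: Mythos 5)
Your argument is sound, but it runs the induction in the opposite direction from the paper and sources its recurrences differently. The paper's proof verifies the base cases on the \emph{primed} side by explicitly summing the rational summand (\ref{partial}) of the $_7F_6$ for small odd $n$, transfers those base values to the unprimed moments via (\ref{gen}), then carries out the induction on the \emph{unprimed} moments using the recurrence (\ref{recc}) — which comes from the integration-by-parts identities of Section \ref{parts}, not from telescoping the hypergeometric sum — and finally transfers the full result back to the primed side by applying (\ref{gen}) with the roles of $x$ and $x'$ reversed. You instead propose to derive recurrences for the primed moments directly by creative telescoping of the $_7F_6$ summands (in the spirit of Lemma \ref{pi3}), induct on the primed side, and push forward to $K^2, E^2, KE$ via Remark \ref{oddr}; this is a legitimate and essentially self-contained alternative, at the cost of having to certify three new recurrences rather than reusing (\ref{recc}), and with the minor caveat that you must check the leading polynomial coefficient of each guessed recurrence does not vanish at the indices you induct through. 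Your worry about circularity in the base case $\int_0^1 x K'(x)^2\,\dx = \tfrac74\zeta(3)$ is unfounded: for odd $n$ the $_7F_6$ summand is the rational function (\ref{partial}), and partial-fraction summation (as sketched in Remark \ref{partfrac}) evaluates it — and indeed every odd moment — without any recurrence at all, so the base data for all three primed products comes for free from the same computation.
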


\begin{proof} We prove the case for the pair $K'^2$ and $K^2$; the other 2 pairs are similar. 

Firstly, when $n$ is odd, the summand of the $_7F_6$ for $K'^2$ is a rational function: 
\begin{equation} \label{partial}
\frac{(2k+m+1)(k+1)^2(k+2)^2\cdots (k+m)^2}{(k+1/2)^4(k+3/2)^4\cdots (k+m+1/2)^4}, \end{equation}
for we have ignored any rational constants and wrote $n=2m+1$. We can explictly sum (\ref{partial}) and verify the statement of the theorem for the first few moment of $K'^2$. By using (\ref{gen}), we can likewise do this for $K^2$.

We now use the recursion (\ref{recc}) to see that the statement holds for all odd moments of $K^2$. By using (\ref{gen}) (but with the role of $x$ and $x'$ reversed), we obtain the result for $K'^2$.

\end{proof}

\begin{remark} \label{partfrac} \rm{ We sketch another proof by expanding (\ref{partial}) into partial fractions.

As each partial fraction has at most a 4th power on the denominator, the sum can involve at most $\zeta(2), \zeta(3), \zeta(4)$, and some contribution from the linear denominators.

Because the linear denominator terms must converge, their sum must eventually telescope, and hence contribute only a rational number.

We recall that partial fractions can be obtained via a derivative process akin to computing Taylor series coefficients; indeed, if we write 
\[ \frac{f(x)}{(x-a)^n} = \frac{A_1}{(x-a)} + \frac{A_2}{(x-a)^2} + \cdots + \frac{A_n}{(x-a)^n} ,\]
then $A_n = f(a), A_{n-1} = f'(a)/1!, \ldots, A_1 = f^{(n-1)}(a)/(n-1)!$.

When applied to (\ref{partial}), it is easy to check that, when $n \equiv 3$ mod 4, the presence of the numerator $2k+m+1$ makes the terms with quadratic and quartic denominators telescope out, leaving us with rational numbers (these terms occur in pairs related by the transformation $k \mapsto (-n-1)/2-k$, where said linear numerator switches sign). Similarly, the terms with cubic denominators double.

When $n \equiv 1$ mod 4, $2k+m+1$ cancels out with one of the denominators, making it a cubic; it can be similarly checked that its partial fraction can have no quadratic term: this is equivalent to showing (\ref{partial}) with all powers of $2k+m+1$ removed has 0 derivative at $k=(-n-1)/4$, which is true because it is symmetric around that point.

Therefore only the cubic terms remain, giving us $\zeta(3)$. 

This type of partial fraction argument is at the heart of the result that infinitely many odd zeta values are irrational (see \cite{zeta}, which, incidentally, is the motivation for Zudilin's Theorem).  }
\end{remark}

We note in passing that by either one of the two known transformations for non-terminating $_7F_6$'s (\cite{bailey}), we can write our $_7F_6$ as the sum of two $_4F_3$'s with $\Gamma$ pre-factors, where one term is easily reduced to closed form when $n$ is odd, while the harder term becomes  obtainable in light of our reduction.

\section{Sporadic results} \label{sporadic}

We list some results found by ad-hoc methods; some are not moment evaluations while others are preparatory for later sections.

\subsection{Explicit primitives}

A small number of integrals have explicit primitives; we list some here: 
\[ K(\sqrt x), \, \frac{E(\sqrt x)}{1-x}, \, \frac{E(x)}{1 \pm x},  \, \frac{E(x)}{1-x^2},\,\frac{K(\sqrt x)}{\sqrt x},  \,K(\sqrt[4]{x}),  \,\frac{K(\sqrt[4]{x})}{\sqrt x},  \,\frac{K(\sqrt[4]{x})}{\sqrt[4]{x}}. \]
We note in passing that even some good CAS may struggle with finding all primitives for this short list.

\subsection{Complementary modulus}

Using the obvious transformation $x \mapsto x'$, we have 
\begin{equation}
 \int_0^1 x K(x)^a E(x)^b K'(x)^c E'(x)^d \dx = \int_0^1 x K'(x)^a E'(x)^b K(x)^c E(x)^d \dx,
\end{equation}
More generally:
\begin{equation}\label{gen}
 \int_0^1 x^{2n+1} K(x)^a E(x)^b K'(x)^c E'(x)^d \dx = \int_0^1 x(1-x^2)^n K'(x)^a E'(x)^b K(x)^c E(x)^d \dx,
\end{equation}
and for any function $f$,
\[ \int_0^1 x x' f(x) \, \dx = \int_0^1 x^2 f(x') \, \dx. \]

\subsection{Imaginary argument}

In \cite{prud} volume III, some integrals with the imaginary argument $ix$ are considered, e.g.
\[ \int_0^1 x K'(x) K(ix) \, \dx = \frac12 G \pi. \]
This can be proven by expanding $x K(ix)$ as a series and summing the moments of $K'(x)$. Other evaluations are done similarly. For instance, we can easily obtain recursions for the moments of $K(ix), E(ix)$.

We record here that 
\[ E(i \sqrt{k}) = \sqrt{k+1} \ E(\sqrt{k/(k+1)}), \ \mathrm{and} \ K(i \sqrt{k}) = 1/\sqrt{k+1} \ K(\sqrt{k/(k+1)}).\]

\subsection{Quadratic transforms}

Using the quadratic transforms (\ref{k1}, \ref{k2}), we obtain
\begin{eqnarray} \label{quad}
\int_0^1 K(x)^n \,\dx & = & \frac12 \int_0^1 K'(t)^n \left(\frac{1+t}{2}\right)^{n-2}\, \mathrm{d}t, \nonumber \\
\int_0^1 K'(x)^n \,\dx & = & 2 \int K(t)^n (1+t)^{n-2} \,\mathrm{d}t.
\end{eqnarray}

Setting  $n=1$ we get the known special case 
\[ \int_0^1 \frac{K(x)}{x+1} \, \dx = \frac{\pi^2}{8}. \]

Using a cubic transform of the Borweins (\cite{cubic}), this has been generalized in \cite{catalan}. The appropriate generalization of (\ref{1}) is
\[ \int_0^1 \, _2F_1\left({{\frac13, \frac23}\atop{1}}\bigg| 1-x^3 \right)^2 \dx = 3 \int_0^1 \, _2F_1\left({{\frac13, \frac23}\atop{1}}\bigg| x^3 \right)^2 \dx. \]
Equation (\ref{1}) itself is obtained by setting $n=2$ in (\ref{quad}).

Using (\ref{k2}) on the integrand $x K(x)^3$, we get 
\[ \int_0^1 2(1-x)K(x)^3 \,\dx = \int_0^1 x K(x)^3 \,\dx, \]
so combined with (\ref{quad}), we deduce
\begin{equation} \label{k3}
 \int_0^1 K'(x)^3 \,\dx = \frac{10}{3} \int_0^1 K(x)^3 \,\dx = 5 \int_0^1 x K(x)^3 \,\dx = 5 \int_0^1 x K'(x)^3 \,\dx. 
\end{equation}

Using (\ref{e1}, \ref{e2}), we have 
\[ \int_0^1 E(x)^n \frac{2^{n+1}}{(x+1)^{n+2}} \,\dx = \int_0^1 (E'(x)+xK'(x))^n \,\dx; \]
when $n=1$ we can evaulate this in closed form:
\[ \int_0^1 \frac{E(x)}{(x+1)^3} \,\dx = \frac{\pi^2}{32}+\frac14. \]
The case $n=2$ gives a messier closed form, in view of our knowledge of the moments.

Similarly, it follows from  (\ref{e1}, \ref{e2}) that
\[ \int_0^1 E'(x)^n \frac{2^{n+1}}{(x+1)^{n+2}}\, \dx = \int_0^1 (2E(x)-(1-x^2)K(x))^n \,\dx; \]
with the special case
\[ \int_0^1 \frac{E'(x)}{(x+1)^3}\, \dx = \frac{G}{8}+\frac{5}{16}. \]

\subsection{Relation to random walks}

In \cite{walks2}, many relations are derived while computing $W_4(n)$, the $n$th moment of the distance from the origin of a 4-step uniform random walk on the plane. For instance, we have:

\[ W_4(1) = \frac{16}{\pi^3} \int_0^1 (1-3x^2)K'(x)^2 \, \dx. \]

In \cite{walks3}, the following identities are proven, via connections with  Meijer G-functions:

\[ \frac{\pi^3}{4} W_4(-1) = \int_0^{\pi/2} K(\sin t)^2 \sin t \, \mathrm{d}t = 2 \int_0^{\pi/2} K(\sin t)^2 \cos t \, \mathrm{d}t = \int_0^{\pi/2} K(\sin t)K(\cos t) \, \mathrm{d}t, \] and
\begin{equation} \int_0^1 K(x)K'(x)x' \, \dx = \int_0^1 K(x)^2 \, \dx. \end{equation}

\section{Fourier series} \label{fs}

As recorded in \cite{bbbg}, we have the following Fourier (sine) series valid on $(0,2\pi)$:
\begin{lem}
\begin{equation} \label{fourier}
K(\sin t) = \sum_{n=0}^\infty  \frac{\Gamma(n+1/2)^2}{\Gamma(n+1)^2}  \sin((4n+1)t).
\end{equation}
\end{lem}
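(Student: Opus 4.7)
The plan is to match Fourier sine coefficients on the half-period $(0,\pi)$, which is the natural domain where $K(\sin t)$ lives unambiguously. Since $K(\sin t) \in L^2(0,\pi)$ (having only an integrable logarithmic singularity at $t = \pi/2$), its Fourier sine series converges pointwise on $(0, \pi)\setminus\{\pi/2\}$ by Dirichlet's theorem. It thus suffices to show that the Fourier sine coefficients $b_k = \frac{2}{\pi}\int_0^\pi K(\sin t)\sin(kt)\,\mathrm{d}t$ satisfy $b_{4n+1} = \Gamma(n+1/2)^2/\Gamma(n+1)^2$ and $b_k = 0$ otherwise.

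The reflection symmetry $K(\sin(\pi - t)) = K(\sin t)$ immediately forces $b_k = 0$ for every even $k$. For the more delicate vanishing at $k \equiv 3\pmod 4$, I would translate the hypergeometric ODE for $K$ into an ODE for $f(t) := K(\sin t)$. Substituting $k = \sin t$ into $k(1-k^2)K'' + (1-3k^2)K' - kK = 0$ and simplifying gives
\[ \sin(2t)\,f''(t) + 2\cos(2t)\,f'(t) - \sin(2t)\,f(t) = 0, \]
equivalently $\tfrac{\mathrm{d}}{\mathrm{d}t}[\sin(2t)\,f'(t)] = \sin(2t)\,f(t)$.

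Substituting $f(t) = \sum_k b_k \sin(kt)$ and using the product-to-sum formulas to expand $\sin(2t)\sin(kt)$ and $\cos(2t)\cos(kt)$, then matching coefficients of $\cos(jt)$, produces for $j \geq 3$ the two-step recurrence
\[ (j+1)^2\,b_{j+2} = (j-1)^2\,b_{j-2}, \]
while the $j = 0, 1, 2$ equations pick out boundary relations forcing $b_2 = b_3 = b_4 = 0$. (The $j=1$ case is the crucial one: only $k=3$ on the right-hand side contributes to the $\cos(t)$ coefficient, since the potential $k=1$ contribution carries a $(k-1)^2 = 0$ factor, yielding $4b_3 = 0$.) Together with the even-index vanishing, the recurrence propagates $b_k = 0$ for every $k \not\equiv 1 \pmod 4$ and iterates on the surviving chain to give
\[ b_{4n+1} \;=\; b_1\prod_{m=1}^{n}\frac{(2m-1)^2}{(2m)^2} \;=\; b_1\cdot\frac{\binom{2n}{n}^2}{16^n}. \]
A single moment computation anchors the sequence: using the complementary first-kind moment from Section 3,
\[ b_1 = \frac{4}{\pi}\int_0^{\pi/2}\!K(\sin t)\sin t\,\mathrm{d}t = \frac{4}{\pi}\int_0^1 K'(u)\,\mathrm{d}u = \pi, \]
so $b_{4n+1} = \pi\binom{2n}{n}^2/16^n = \Gamma(n+1/2)^2/\Gamma(n+1)^2$, as required.

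The main obstacle I expect is the careful bookkeeping in the coefficient matching, particularly the fact that $\cos((k-2)t) = \cos((2-k)t)$ causes low-index terms to ``fold back'' and pair $k=1$ with $k=3$ in the $j = 1$ equation. Overlooking this fold would leave the elimination of the $\equiv 3\pmod 4$ modes as a separate combinatorial identity that would have to be extracted from the moment table, significantly complicating the argument.
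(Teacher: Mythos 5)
Your proof is correct, but it takes a genuinely different route from the paper. The paper's proof computes the odd Fourier coefficients head-on: after the change of variable $\cos t\mapsto x$ the coefficient becomes $\tfrac4\pi\int_0^1 K'(x)U_{2n}(x)\,\dx$, which is evaluated by expanding the Chebyshev polynomial, using the moments of $K'$, and reducing the resulting $_3F_2$ via a Thomae-type transformation and Saalsch\"utz's theorem. You instead exploit the Legendre ODE $k(1-k^2)K''+(1-3k^2)K'-kK=0$, which under $k=\sin t$ correctly becomes $\frac{\mathrm{d}}{\mathrm{d}t}[\sin(2t)f'(t)]=\sin(2t)f(t)$; matching cosine coefficients yields $(j+1)^2b_{j+2}=(j-1)^2b_{j-2}$ for $j\ge3$, the low-$j$ fold-back kills $b_2,b_3,b_4$ (I get $2b_3=0$ rather than $4b_3=0$ from the $j=1$ equation, but the conclusion $b_3=0$ stands), and the single anchor $b_1=\tfrac4\pi\int_0^1K'(u)\,\mathrm{d}u=\pi$ from Section 3 closes the induction, reproducing $b_{4n+1}=\pi\binom{2n}{n}^2/16^n=\Gamma(n+\tfrac12)^2/\Gamma(n+1)^2$. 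What your approach buys is the avoidance of any $_3F_2$ machinery: the vanishing of the $k\equiv3\pmod4$ modes, which in the paper is hidden inside Saalsch\"utz summation, falls out of the recurrence together with $b_3=0$. The one step you should not leave implicit is the substitution of the series into a second-order ODE for a function with a logarithmic singularity at $t=\pi/2$: rather than differentiating the series termwise, integrate the self-adjoint identity against $\cos(jt)$ over $(0,\pi)$ and integrate by parts twice (the boundary terms vanish since $\sin(2t)f'(t)$ stays bounded through $t=\pi/2$), which converts your formal coefficient-matching into rigorous linear relations among the $b_k$. With that emendation the argument is complete on $(0,\pi)$, which is in fact the correct interval of validity.
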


For completeness, we sketch a proof here:

\begin{proof}
By symmetry we see that only the coefficients of $\sin((2n+1)t)$ are non-zero. Indeed, by a change of variable $\cos t \mapsto x$, the coefficients are
\[ \frac{4}{\pi} \int_0^1 K'(x) \frac{\sin((2n+1)t)}{\sin t} \, \mathrm{d}t. \]
The fraction in the integrand is precisely $U_{2n}(x)$, where $U_n(x)$ denotes the \textit{Chebyshev polynomial of the second kind} (\cite{wolf}), given by
\[ U_{2n}(x) = \sum_{k=0}^n (-1)^k \binom{2n-k}{k} (2x)^{2n-2k}. \]
We now interchange summation and integration, and use the moments of $K'$. The resulting coefficient contains a $_3F_2$, which after a transformation (\cite{bailey}, section 3.2) becomes amenable to Saalsch{\"u}tz's  theorem (\cite{aar}), and we obtain (\ref{fourier}).
\end{proof}

We may apply the same method to obtain a Fourier sine series for $E(\sin t)$ valid on $(0,\pi)$, which we had not been able to locate in the literature. In mirroring the last step, the resulting $_3F_2$ is reduced to the closed form below using Sister Celine's method:
\begin{lem}
\begin{equation} \label{fourier2}
E(\sin t) = \sum_{n=0}^\infty \frac{\Gamma(n+1/2)^2}{2\Gamma(n+1)^2}\sin((4n+1)t) + \sum_{n=0}^\infty \frac{(n+1/2)\Gamma(n+1/2)^2}{2(n+1)\Gamma(n+1)^2}\sin((4n+3)t).
\end{equation}
\end{lem}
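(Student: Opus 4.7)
The plan is to mirror the proof of the Fourier series for $K(\sin t)$ sketched immediately above. Since $E(\sin t) = E(\sin(\pi-t))$, the function is symmetric about $t=\pi/2$, so its Fourier sine series on $(0,\pi)$ contains only odd-indexed terms $\sin((2m+1)t)$. Accordingly, I would begin from
\[ b_{2m+1} = \frac{2}{\pi}\int_0^\pi E(\sin t)\sin((2m+1)t)\,\mathrm{d}t = \frac{4}{\pi}\int_0^{\pi/2} E(\sin t)\sin((2m+1)t)\,\mathrm{d}t, \]
and substitute $x=\cos t$, so that $\sin t = x'$ and $E(\sin t)=E'(x)$. Using $\sin((2m+1)t)/\sin t = U_{2m}(\cos t) = U_{2m}(x)$, this becomes
\[ b_{2m+1} = \frac{4}{\pi}\int_0^1 E'(x)\, U_{2m}(x)\,\dx. \]

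Next I would expand $U_{2m}(x)$ as the polynomial given in the previous proof, interchange summation and integration, and substitute the closed-form moments $\int_0^1 x^{2j} E'(x)\,\dx$ recorded at the start of Section \ref{two2}. After absorbing everything into Pochhammer symbols, this presents $b_{2m+1}$ as a Gamma-function prefactor times a $_3F_2(1)$ whose upper and lower parameters depend linearly on $m$.

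The main obstacle is collapsing this $_3F_2$ to the elementary coefficients appearing on the right-hand side of (\ref{fourier2}). The natural first attempt is a standard three-term transformation (e.g.\ from Bailey's tract) to bring the series into Saalschützian form so that Saalschütz's theorem applies directly; this worked for $K$ because both numerator parameters were $1/2$, but the $-1/2$ parameter inherited from the series for $E$ spoils the balance and no single Saalschützian evaluation is available. This is precisely why Sister Celine's method is invoked: I would treat the summand as a proper hypergeometric term in the two variables $(k,m)$, run the algorithm to produce a linear recurrence in $m$ with polynomial coefficients satisfied by $b_{2m+1}$, and then verify that the claimed right-hand side of (\ref{fourier2}) satisfies the same recurrence. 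A key point is that the closed-form expression splits naturally into two pieces depending on whether $2m+1 \equiv 1$ or $3\pmod 4$, so the recurrence must be checked against both subsequences.

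To conclude, I would compute enough initial values to pin down both residue classes of $m$ modulo $2$, thereby forcing equality of the recurrence's solutions. Pointwise convergence of the resulting series on $(0,\pi)$ follows from standard Dirichlet-type criteria since $E(\sin t)$ is continuous and piecewise smooth. I expect the execution of Sister Celine's algorithm (and identifying the cleanest form of the recurrence) to be the only non-routine step; everything else is a direct transcription of the $K(\sin t)$ argument.
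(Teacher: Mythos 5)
Your proposal follows essentially the same route as the paper: symmetry to isolate the odd sine coefficients, the substitution $x=\cos t$ turning the coefficient into $\frac{4}{\pi}\int_0^1 E'(x)U_{2m}(x)\,\dx$, interchange of summation and integration against the known moments of $E'$, and Sister Celine's method to certify the resulting $_3F_2$ against the claimed closed form via a recurrence plus initial values. Your explicit handling of the two residue classes of $2m+1$ modulo $4$ and the remark on pointwise convergence are sensible elaborations of details the paper leaves implicit, and the plan is correct.
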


Using Parseval's formula on (\ref{fourier}) and (\ref{fourier2}) gives 
\begin{eqnarray} & & \int_0^{\pi/2} K(\sin t)^2 \, \mathrm{d}t = \nonumber 2 \int_0^{\pi/2} K(\sin t)E(\sin t) \, \mathrm{d}t \\
&= &\int_0^1 \frac{K(x)^2}{\sqrt{1-x^2}} \, \dx = 2 \int_0^1 \frac{E(x)^2}{\sqrt{1-x^2}} \, \dx \nonumber \\
& = & 2\int_0^1 K(x)K'(x) \, \dx = \frac{\pi^3}{4} \, _4F_3\left({{\frac12,\frac12,\frac12,\frac12}\atop{1,1,1}}\bigg| 1 \right). \label{fou}
\end{eqnarray}
We also get $\int_0^{\pi/2} K(\sin t)^2 \cos(4t) \, \mathrm{d}t$ as a sum of three $_4F_3$'s, and $\int_0^{\pi/2} E(\sin t)^2 \, \mathrm{d}t$ as a sum of four $_4F_3$'s. Section 3.7 of \cite{bbbg} provides more identities with more exotic arguments, as well as connections with Meijer G-functions.

Experimentally we  found that 
\begin{eqnarray}
\int_0^{\pi/2} \frac14 K(\sin t)^2 \sin 4t \, \mathrm{d}t &= & \int_0^1 K(x)^2(x-2x^3) \, \dx = \int_0^1 x K(x)((1-x^2)K(x)-E(x)) \, \dx \nonumber \\
& = &  \int_0^1 K'(x)^2(2x^3-x) \, \dx = \int_0^1 x K'(x)(x^2 K'(x)-E'(x)) \, \dx \nonumber \\
& =& \int_0^1 x K^2(x)-2x E(x) K(x) \, \dx= -\frac12.
\end{eqnarray}

The last line, an evaluation which is somewhat surprising, is equivalent to
\[ \int_0^1 xK(x)^2+2xE(x)^2-3xK(x)E(x) \, \dx = 0, \]
which is routine to check as we know all the odd moments.

Inserting a factor of $\cos(t)^2$ before squaring the Fourier series (\ref{fourier}) and integrating, and we are led to 
\[ \int_0^1 x' K(x)^2 \, \dx = \frac{\pi^3}{16}\left(2 \, _3F_2\left({{\frac12,\frac12,\frac12,\frac12}\atop{1,1,1}}\bigg| 1\right)-1\right) = \int_0^1 \frac{\sqrt{x}}{x+1}K'(x)^2 \, \dx. \]

The Fourier series (\ref{fourier}) combined with a quadratic transform gives:
\begin{eqnarray}
& & \int_0^{\pi/2} K(\sin t) \, \mathrm{d}t = \int_0^1 \frac{K(x)}{\sqrt{1-x^2}} \, \dx =  \int_0^1 \frac{K'(x)}{\sqrt{1-x^2}} \, \dx =  \int_0^1 \frac{K(x)}{\sqrt{x}} \, \dx \nonumber \\
& = & \frac12 \int_0^1 \frac{K'(x)}{\sqrt{x}} \, \dx = K\left(\frac{1}{\sqrt2}\right)^2 = \frac{1}{16\pi}\Gamma\left(\frac14\right)^4.
\end{eqnarray}
This result has been generalized in (\cite{catalan}).

\section{Legendre's relation}\label{lege}

Legendre's relation (\cite{agm}) is related to the Wronskian of $K$ and $E$, and shows that the two integrals are closely coupled:

\begin{equation} \label{legendre}
E(x)K'(x)+E'(x)K(x)-K(x)K'(x) = \frac{\pi}{2}.
\end{equation}

If we take Legendre's relation,  multiply both sides by $K'(x)$ and integrate, then replacing known moments by their closed forms, we arrive at
\[ \int_0^1 3 E'(x)K'(x)K(x)-K(x)K'(x)^2 \, \dx = \frac{\pi^3}{8}. \]
Similarly, if we had multiplied by $K(x)$, the result would be
\begin{equation} \int_0^1 3 E(x)K(x)K'(x) - 2 K(x)^2K'(x) \, \dx = \int_0^1 2E'(x)K(x)^2-E(x)K(x)K'(x) \, \dx = \pi G. \end{equation}

Using closed form of the moments, we also have: 
\[ \int_0^1 2xE'(x)K(x)^2K'(x)-xK(x)^2K'(x)^2 \, \dx = \frac{\pi^4}{32}, \]
\[ \int_0^1 2xE'(x)^2K(x)E(x) - x K(x)K'(x)E(x)E'(x) \, \dx = \frac{\pi^2}{16}+\frac{\pi^4}{128}. \]

We can multiply Legendre's relation by any function whose integral vanishes on the interval $(0,1)$ to produce another relation. Suitable candidates for the function include $x(K(x)-K'(x)), 2(2K'(x)-3E'(x)), 2E'(x)-K'(x), 2E(x)-K(x)-1$, and a vast range of polynomials.  For instance one could obtain
\[ \int_0^1 2E'^2(x) K(x)+2E(x)E'(x)K(x)-5E'(x)K(x)K'(x)+K(x)K'(x)^2 \, \dx = 0. \]
Unfortunately, we are not able to uncouple any of the above sums and differences to obtain a closed form for the integral of a single product.

\section{Integration by parts} \label{parts}

The following simple but fruitful idea is crucial to this section. We look at the derivative $(1-x^2)^n \frac{\mathrm{d}}{\dx} (x^k K(x)^a E(x)^b K'(x)^c E'(x)^d)$, and  integrate by parts to yield
\begin{eqnarray} 
& & \int_0^1 (1-x^2)^n \frac{\mathrm{d}}{\dx} \Bigl( x^k K(x)^a E(x)^b K'(x)^c E'(x)^d \Bigr) \, \dx \nonumber \\
& = & \int_0^1 2nx(1-x^2)^{n-1} x^k K(x)^a E(x)^b K'(x)^c E'(x)^d \, \dx, \label{byparts}
\end{eqnarray}
(where an offset by a constant is possible if the function is a powers of $E$ or $E'$).

In practice, we take $n,k \in \{0,1,2\}$ to produce the cleanest identities. Later, we will also explore when $n$ is a half-integer, as well as replacing $1-x^2$ by $1-x$ in (\ref{byparts}).

\subsection{Table for products of two elliptic integrals}

We now systematically analyse the tables kindly provided by D. H. Bailey, the construction of which are described in \cite{combat}. The tables contain all known (in fact, almost certain \textit{all}) linear relations for integrals of products of up to $k$ elliptic integrals ($k \le 6$) and a polynomial in $x$ with degree at most 5. In this subsection we exclusively look at when $k=2$.

We use the derivative $x \frac{\mathrm{d}}{\dx}E(x)^2 = 2 E(x)^2-2E(x)K(x)$, and integrate by parts to deduce
\begin{equation}\label{eq1} \int_0^1 3 E(x)^2 - 2 E(x)K(x) \, \dx= 1. \end{equation}

More generally, 
\begin{equation}  1 = (n+k+1) \int_0^1 x^k E(x)^n \, \dx - n \int_0^1 x^k E(x)^{n-1} K(x) \, \dx. \end{equation}

Two special cases of the above are prominent:
\begin{equation} \label{eq2} \int_0^1 5 x^2 E(x)^2 - 2 E(x)K(x) \, \dx = 1, \end{equation}
and
\begin{equation} \label{rec1}  \int_0^1 (n+2)x^{n-1}E(x)^2 - 2 x^{n-1} E(x)K(x) \, \dx = 1. \end{equation}

The derivative of $K(x)E(x)$ (via integration by parts) gives
\begin{equation} \label{eq3} \int_0^1 (1-3x^2)E(x)K(x)+E(x)^2-(1-x^2)K(x)^2 \, \dx = 0, \end{equation}
while more generally
\begin{equation} \label{rec2} \int_0^1 n x^{n-1}E(x)K(x)-(n+2)x^{n+1}E(x)K(x)+x^{n-1}E(x)^2-x^{n-1}K(x)^2+x^{n+1}K(x)^2 \, \dx = 0.
\end{equation}

The derivative of $K(x)^2$ produces
\begin{equation} \label{eq4} \int_0^1 (1+x^2) K^2(x) \, \dx = 2 \int_0^1 K(x)E(x) \, \dx, \end{equation}
and more generally,
\begin{equation} \label{rec3} \int_0^1 2x^{n-1}E(x)K(x)+(n-2)x^{n-1}K(x)^2-nx^{n+1}K(x)^2 \, \dx = 0. \end{equation}

The derivative of $E'(x)^2$ gives
\[ \int_0^1 2xE'(x)^2-xE'(x)K'(x)\dx = \int_0^1 2xE(x)^2-xE(x)K(x) \,\dx =\frac12. \]

The derivative of $K'(x)^2$ gives 
\[ \int_0^1 2K'(x)E'(x)-(1-x^2)K'(x)^2 \, \dx = 0,\] reconfirming a result from random walks (\cite{walks2}), which was first proven  in a much more roundabout way via a non-trivial group action on the integrand.

The derivative of $E'(x)K'(x)$ gives
\[ \int_0^1 (1-3x^2)E'(x)K'(x) \,\dx = \int_0^1 E'(x)^2-x^2 K'(x)^2 \, \dx,\]
which when combined with the last result, gives
\[ \int_0^1 (1+3x^2)E'(x)K'(x) \, \dx = \int_0^1 K'(x)^2-E'(x)^2 \, \dx. \]

The derivative of $K(x)K'(x)$ gives
\[ \int_0^1 x^2 K(x)K'(x)+K(x)E'(x)-K'(x)E(x) \, \dx = 0,  \]
which, when combined with Legendre's relation (\ref{legendre}), results in
\[ \int_0^1 2 E'(x)K(x) - (1-x^2) K(x)K'(x) \, \dx = \frac{\pi}{2}. \]

Our results above and in the previous sections actually provide direct proofs of most entries in the table where the polynomial involved is of degree at most 1. In fact, it would simply be a matter of tenacity to prove many other entries in the table where the polynomial is of higher degree. As an example, we indicate how to prove one of the few entries that requires more work:
\begin{equation} \int_0^1 E(x)(3E'(x)-K'(x)) \, \dx = \frac{\pi}{2}. \end{equation}

We note that the left hand side can be written as two $_4F_3$'s. We combine their summands into a single term and simplify; the result can be summed explicitly by Gosper's algorithm, and the correct limit on the right hand side follows.

There is only one entry in the tables for which we do not possess a proof:
\begin{conj}
\begin{equation} \label{eq5} \int_0^1 2K(x)^2 - 4 E(x)K(x) + 3 E(x)^2 - K'(x)E'(x) \, \dx \stackrel{?[1]}{=} 0.
\end{equation}
\end{conj}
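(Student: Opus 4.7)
My plan is to reduce every term to a tractable hypergeometric form, then chase a non-trivial ${}_7F_6$ identity, running in parallel an integration-by-parts attack in the spirit of Section \ref{parts}.

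First, I would apply $x \mapsto x'$ to convert $\int_0^1 K'(x)E'(x)\,\dx$ into a weighted moment of $KE$:
\[ \int_0^1 K'(x) E'(x) \, \dx = \int_0^1 \frac{x\, K(x) E(x)}{\sqrt{1-x^2}} \, \dx. \]
Using (\ref{eq1}) to eliminate $\int 3E^2\,\dx$ in favour of $1 + 2\int EK\,\dx$, and (\ref{eq4}) to rewrite $2\int K^2\,\dx - 2\int EK\,\dx$ as $\int (1-x^2)K^2\,\dx$, the conjecture reduces to the more symmetric statement
\[ \int_0^1 (1-x^2)\, K(x)^2 \, \dx + 1 \ \stackrel{?}{=}\ \int_0^1 \frac{x\, K(x) E(x)}{\sqrt{1-x^2}} \, \dx. \]

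Second, I would evaluate both sides via the triple-integral method of Section \ref{two1}. A preliminary $x \mapsto x'$ turns the left-hand side into $\int_0^1 x^3 K'(x)^2/\sqrt{1-x^2}\,\dx$, and both integrals then fit the framework of Theorem \ref{zudilin} after substitutions mimicking those used for $\int E'K'\,\dx$. The result would be two specific ${}_7F_6$'s at unit argument, and the conjecture would become a linear identity between them (up to the explicit constant $1$).

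As a parallel attack, I would try integration by parts with half-integer weights in (\ref{byparts}): computing $\int_0^1 \sqrt{1-x^2}\, \frac{\mathrm{d}}{\dx}\bigl(x K(x) E(x)\bigr) \, \dx$ produces precisely the $x/\sqrt{1-x^2}$ weight of the right-hand side, paired with terms of the form $\sqrt{1-x^2}\cdot K^2$, $\sqrt{1-x^2}\cdot KE$ and $E^2/\sqrt{1-x^2}$; after applying $x \mapsto x'$ to convert these back to standard moments, there is a real chance the bookkeeping collapses to the desired equality. As a sanity check, the analogous manoeuvre with $xK^2$ in place of $xKE$ reproduces (\ref{fou}), lending credence to this approach. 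I expect the main obstacle to be the matching of the two ${}_7F_6$'s, or equivalently the delicate boundary behaviour at $x=1$ (where $K(x) \sim -\tfrac12 \log(1-x^2)$) together with the final algebraic collapse in the integration-by-parts strand: the identity does not appear to follow directly from the two classical non-terminating ${}_7F_6 \to {}_4F_3$ transforms hinted at the end of Section \ref{two1}, nor from any single IBP witness, which is consistent with its status as the lone conjecture in the tables.
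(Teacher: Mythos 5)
This statement is the paper's Conjecture~(\ref{eq5}); the author explicitly states it is the one entry of Bailey's tables for which no proof is known, so there is no proof in the paper to compare against, and your proposal does not supply one either. Your preliminary reductions are correct: $x\mapsto x'$ does give $\int_0^1 K'E'\,\dx=\int_0^1 xKE/x'\,\dx$, and combining (\ref{eq1}) with (\ref{eq4}) correctly turns (\ref{eq5}) into $\int_0^1(1-x^2)K(x)^2\,\dx+1=\int_0^1 K'(x)E'(x)\,\dx$. But note what this reduction accomplishes: since $\int K^2$ and $\int K'E'$ are "known" (as ${}_7F_6$'s), the reduced statement is exactly a closed form for $\int_0^1 x^2K(x)^2\,\dx$, which is one of the five moments the paper singles out as inaccessible. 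You have relocated the difficulty, not removed it.

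Neither of your two strands then closes the gap. In the Zudilin strand, the mixed moment $\int_0^1 x^3K'(x)^2/\sqrt{1-x^2}\,\dx$ is not covered by the stated propositions (which treat pure moments $x^n$ only), and the extra factor $(1-a^2)^{-1/2}=(1-yz)^{-1/2}$ arising in the triple-integral build-up does not match the kernel $(1-x(1-y(1-z)))^{-h_1}$ of Theorem~\ref{zudilin}; even granting this, you would end with an unproven linear relation between two distinct ${}_7F_6(1)$'s, i.e.\ a restatement of the conjecture rather than a proof (the paper already notes that the classical ${}_7F_6\to{}_4F_3$ transforms do not resolve it). In the integration-by-parts strand there is a concrete error: by (\ref{byparts}) with $n=\tfrac12$, $\int_0^1\sqrt{1-x^2}\,\tfrac{\mathrm{d}}{\dx}(xK(x)E(x))\,\dx$ produces the weight $x\cdot xKE/\sqrt{1-x^2}=x^2KE/\sqrt{1-x^2}$, not the needed $xKE/\sqrt{1-x^2}$; differentiating $KE$ instead fixes the weight but yields $\int_0^1 xKE/x'\,\dx=\int_0^1\bigl(E^2-(1-x^2)K^2\bigr)/(xx')\,\dx$, which introduces new non-moment integrals rather than collapsing to $\int(1-x^2)K^2\,\dx+1$. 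The hedged phrases ("there is a real chance the bookkeeping collapses") mark genuine unfilled gaps: the statement remains, as in the paper, an open conjecture.
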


We note that, among moments of products of two elliptic integrals, there are only five that we do not have closed forms of:
\[ E(x)^2, \,x^2 E(x)^2, \,E(x)K(x), \,x^2 E(x)K(x),\, x^2K(x)^2, \]
as all the odd moments are known, and the other even moments may be obtained from these ignition values. Unfortunately, we can only prove four equations connecting them, namely (\ref{eq1}, \ref{eq2}, \ref{eq3}, \ref{eq4}). A proof of (\ref{eq5}) would give us enough information to solve for all five moments.

\subsection{Recurrences for the moments}

As already hinted in Lemma \ref{pi3}, the moments enjoy recurrences with polynomial coefficients. As an example, by combining (\ref{rec1}, \ref{rec2}, \ref{rec3}), we may obtain, with $K_n := \int_0^1 x^n K(x)^2 \, \dx$,
\begin{equation} \label{recc}
(n+1)^3 K_{n+2}-2n(n^2+1)K_n+(n-1)^3K_{n-2}=2.
\end{equation}
This then shows that $K_n$ is a rational number plus a rational multiple of $\zeta(3)$ for odd $n$.

Similarly, recurrences for other products may be obtained in the same way, though the linear algebra becomes more prohibitive. We are able to obtain, as another example, for $E_n := \int_0^1 x^n E(x)^2 \, \dx$,
\begin{equation}
(n+1)(n+3)(n+5)E_{n+2}-2(n^3+3n^2+n+1)E_n+(n-1)^3E_{n-2}=8,
\end{equation}
while the recursion for the moments of $EK$ is straightforward from this using (\ref{rec1}). The recursion for the moments of $K'^2$ is identical that (\ref{recc}) except the right hand side is 0, not 2. In particular, this approach is used in the proof of Theorem \ref{zeta3}.

\subsection{More results}

We also discovered some results not found in the tables by incorporating constants such as $\pi$ and $G$ into the search space. Below we highlight some other pretty formulae. Using $(1-x^2)\frac{\mathrm{d}}{\dx}(x^2 K(x)^2)$ gives
\[ \int_0^1 x^3 K(x)^2 - x K(x)E(x) \, \dx = 0, \]
while the derivative of $ x^2 K(x)K'(x)$ gives 
\begin{eqnarray} & &\int_0^1 x K(x)K'(x) \, \dx = \int_0^1 2x^3K(x)K'(x) \, \dx \nonumber \\
&=&  \int_0^1 \frac{1-x}{1+x}K(x)K'(x) \, \dx = \frac{\pi^3}{16}.
\end{eqnarray}

The derivative of $x^2K(x)^n$ implies
\[ \int_0^1 x^{n-1}K(x)^{n-1}(nE(x)-2x^2K(x)) \, \dx = 0. \]

The derivative of $x^{2n}K(x)K'(x)$ together with Legendre's relation (\ref{legendre}) gives
\begin{equation} \label{leg}
 \int_0^1 x^{2n-1}(E'(x)K(x)+n(x^2-1)K(x)K'(x)) \, \dx = \frac{\pi}{8n}. 
\end{equation}

In (\ref{byparts}),we can take $n = \frac12$. For instance, the derivative of $xx'K(x)$ gives 
\[ \int_0^1 \frac{E(x)}{x'} \, \dx = \int_0^1 \frac{x^2 K(x)}{x'} \, \dx, \] 
and the derivative of $x x' K(x)^2$ recaptures (\ref{fou}).

The derivative of $x'K(x)$ gives \[ \int_0^1 \frac{K(x)-E(x)}{xx'} \, \dx = \frac{\pi}2, \] note that each part does not converge.

In fact, \begin{equation} K(x)-E(x) = \frac{\pi x^2}{4} \, _2F_1\left({{\frac12,\frac32}\atop{2}}\bigg| x^2 \right), \end{equation}
therefore
\[ \int_0^1 \frac{K(x)-E(x)}{x} \, \dx = \frac{\pi}2-1, \ \int_0^1 \frac{K(x)-E(x)}{x^2} \, \dx = 1, \ \int_0^1 \frac{K(x)-E(x)}{x x'} \, \dx = \frac{\pi}2. \]

The derivative of $x(1-x)K(x)^2$ gives
\[ \int_0^1 \frac{2K(x)E(x)}{x+1} \, \dx = \int_0^1 K(x)^2 \, \dx, \]
while that of $x(1-x)E(x)K(x)$ gives
\[ \int_0^1 \frac{E(x)^2}{x+1} \, \dx = \int_0^1 (2x-1)E(x)K(x)+(x-1)K(x)^2 \, \dx. \]

Collecting what we know about the integral of $K(x)^2$, we have the following:
\begin{thm} 
\begin{eqnarray*}
 \int_0^1 K(x)^2 \, \dx & = & \frac12 \int_0^1 K'(x)^2 \, \dx \\
& = & \int_0^1 K'(x)^2 \frac{x}{x'} \, \dx \\
& = & \int_0^1 K(x)K'(x)x' \, \dx \\
& = & \int_0^1 \frac{2K(x)E(x)}{x+1} \, \dx \\
& = & \frac{2}{\pi} \int_0^1 \frac{\arcsin x}{\sqrt{1-x^2}} K(x)K'(x) \, \dx \\
& = & \frac{4}{\pi} \int_0^1 \mathrm{arctanh}(x)K(x)K'(x) \, \dx.
\end{eqnarray*}
\end{thm}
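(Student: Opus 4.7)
The theorem collects six expressions for $\int_0^1 K(x)^2 \, \dx$. Four of them are essentially restatements of results already in the paper, so the only real work concerns the two identities involving $\arcsin$ and $\mathrm{arctanh}$, both of which will follow from (\ref{gf}) after an appropriate substitution.

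For the first four: equality 1 is (\ref{1}), proved in the introduction. Equality 2 follows by the involution $y = x'$, under which $K'(x)$ becomes $K(y)$, the weight $x/x'$ becomes $y'/y$, and $\dx$ picks up a Jacobian $-(y/y')\,\mathrm{d}y$ that exactly cancels the weight, leaving $\int_0^1 K(y)^2\,\mathrm{d}y$. Equality 3 is the identity from \cite{walks3} quoted in the random-walks subsection of Section \ref{sporadic}, and equality 4 is the consequence of differentiating $x(1-x)K(x)^2$ and integrating by parts, as displayed near the end of Section \ref{parts}.

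For the $\arcsin$ identity I would substitute $x = \sin y$ on the right-hand side and use $K'(\sin y) = K(\cos y)$, obtaining $\frac{2}{\pi}\int_0^{\pi/2} y\, K(\sin y) K(\cos y) \, \mathrm{d}y$. Since the kernel $K(\sin y) K(\cos y)$ is symmetric under $y \mapsto \pi/2 - y$, averaging with the reflected copy gives
\[ \frac{2}{\pi}\int_0^{\pi/2} y\, K(\sin y) K(\cos y) \, \mathrm{d}y = \tfrac{1}{2}\int_0^{\pi/2} K(\sin y) K(\cos y) \, \mathrm{d}y. \]
The chain in Section \ref{sporadic} already asserts $\int_0^{\pi/2} K(\sin t) K(\cos t) \, \mathrm{d}t = \int_0^{\pi/2} K(\sin t)^2 \sin t \, \mathrm{d}t$, and the change of variables $u = \cos t$ reduces the latter to $\int_0^1 K'(u)^2 \, \mathrm{d}u$, which by equality 1 equals $2\int_0^1 K(u)^2 \, \mathrm{d}u$; combining gives equality 5.

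For the $\mathrm{arctanh}$ identity I would integrate both sides of (\ref{gf}) with respect to $t$ over $[0,1]$. By Fubini and the elementary computation $\int_0^1 \tfrac{x\, \mathrm{d}t}{1 - t^2 x^2} = \mathrm{arctanh}(x)$, the left-hand side becomes $\int_0^1 K(x) K'(x)\, \mathrm{arctanh}(x) \, \dx$, while the right-hand side is $\tfrac{\pi}{4}\int_0^1 K(t)^2 \, \mathrm{d}t$; rearranging yields equality 6. The only genuinely non-routine step in the whole proof is spotting the $y \mapsto \pi/2 - y$ symmetrisation that strips the weight $y$ off the $\arcsin$ integral; everything else is bookkeeping of identities already in hand.
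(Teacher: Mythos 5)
Your proof is correct, and for five of the six equalities it coincides with the paper's (one-sentence) proof: equalities 1--4 are indeed just restatements of (\ref{1}), the substitution $x\mapsto x'$, the \cite{walks3} identity quoted in Section \ref{sporadic}, and the integration-by-parts identity at the end of Section \ref{parts}; and the $\mathrm{arctanh}$ identity is obtained, exactly as the paper intends, by integrating (\ref{gf}) in $t$ over $[0,1]$ via $\int_0^1 x\,\mathrm{d}t/(1-t^2x^2)=\mathrm{arctanh}(x)$. The one place you genuinely diverge is the $\arcsin$ identity. The paper attributes it to (\ref{gf}) as well; the natural reading is to multiply (\ref{gf}) by $t/\sqrt{1-t^2}$ and integrate, using $\int_0^1 xt\,\mathrm{d}t\big/\bigl((1-t^2x^2)\sqrt{1-t^2}\bigr)=\arcsin(x)/\sqrt{1-x^2}$ on the left and $\int_0^1 t\,K(t)^2/t'\,\mathrm{d}t=\int_0^1 K'(u)^2\,\mathrm{d}u=2\int_0^1K(u)^2\,\mathrm{d}u$ on the right. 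You instead set $x=\sin y$, use the reflection symmetry $y\mapsto\pi/2-y$ of $K(\sin y)K(\cos y)$ to replace the weight $y$ by its mean $\pi/4$, and then invoke $\int_0^{\pi/2}K(\sin t)K(\cos t)\,\mathrm{d}t=\int_0^{\pi/2}K(\sin t)^2\sin t\,\mathrm{d}t$ from the chain quoted out of \cite{walks3}. Both routes are sound; yours is computationally lighter and makes the mechanism (a simple symmetrisation) visible, but it imports the Meijer-G-based identity of \cite{walks3} as an extra external ingredient, whereas the paper's route keeps the last two equalities self-contained consequences of (\ref{gf}). (You could even shorten your own argument by pairing the symmetrised integral with $2\int_0^{\pi/2}K(\sin t)^2\cos t\,\mathrm{d}t=\int_0^{\pi/2}K(\sin t)K(\cos t)\,\mathrm{d}t$, which gives $\int_0^1K(u)^2\,\mathrm{d}u$ directly without passing through (\ref{1}).)
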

\begin{proof} The last two equalities follow from (\ref{gf}); the rest had been proven elsewhere.
\end{proof}

\subsection{Table for products of three elliptic integrals}

We now consider the linear relations involving the product of three elliptic integrals ($k=3$ in the tables). As the number of relations provided by Bailey's tables is huge, we restrict most of our attention to a class of integrals that turn out to be mutually related by a rational factor.

Below we tabulate all the products for which neat integrals may be deduced by differentiating them and integrating by parts:

\begin{center}
\begin{tabular}{|l|rl|} \hline
Product: & Integral: & \\ \hline
$K(x)^3$  & $\int_0^1 K(x)^3-3K(x)^2E(x) \, \dx$ & $= 0$ \\
$K(x)^2 K'(x)$ & $ \int_0^1 K(x)^2 E'(x)+K(x)^2K'(x)-2K(x)K'(x)E(x) \, \dx$&$ = 0$ \\
$K'(x)^2 K(x)$ & $\int_0^1 E(x)K'(x)^2 - 2 E'(x)K(x)K'(x) \, \dx $&$= 0$ \\ 
$K'(x)^3$ & $\int_0^1 K'(x)^3 - 3 K'(x)^2 E'(x)\, \dx $&$= 0$ \\ 
$E'(x)^3$ & $\int_0^1 5 x E'(x)^3 - 3 x E'(x)^2 K'(x) \, \dx $&$= 1$ \\
$E(x)^3$ & $\int_0^1 4E(x)^3-3E(x)^2K(x) \, \dx $&$= 1$ \\ \hline
\end{tabular}
\end{center}

We now prove 
\[ \int_0^1 K(x)^2K'(x) \, \dx = \frac23 \int_0^1 K(x)K'(x)^2 \, \dx. \]
We make a change variable $x \mapsto (1-x)/(1+x)$ to the left hand side, use a quadratic transform, then apply $x \mapsto 2\sqrt{x}/(1+x)$ to part of the result followed by another quadratic transform (\ref{k1}, \ref{k2}), obtaining
\[ \int_0^1 3x K(x)K'(x)^2 \, \dx = \int_0^1 K(x) K'(x)^2 \, \dx, \]
now combining the pieces proves the claim.

If we make the change of variable $x \mapsto (1-x)/(1+x)$, then apply (\ref{e1}), we have
\[ \int_0^1 \frac{K(x)^2 E(x)}{1+x} \, \dx = \frac49 \int_0^1 K(x)^3 \, \dx. \]

Using the derivative of $x^2(1-x)K(x)^3$, we can show that $\int_0^1 x K(x)^2 E(x)/(x+1)\,  \dx$ is also linearly related to the above integral.

Therefore, gathering the results in this section and equation(\ref{k3}), we have determined:

\begin{thm} \label{equiv} Any two integrals in each of the following groups are related by a rational factor:
\begin{equation*}
K(x)^3, K'(x)^3, xK(x)^3, xK'(x)^3, K(x)^2 E(x), K'(x)^2 E'(x), \frac{K^2(x) E(x)}{1+x}, \frac{xK^2(x) E(x)}{1+x};
\end{equation*}

\begin{equation}
K(x)K'(x)^2, K(x)^2 K'(x), x K(x)K'(x)^2, x K(x)^2 K'(x).
\end{equation}
\end{thm}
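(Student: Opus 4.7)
The plan is to prove each group by chaining together relations already established in the paper. For Group 1, the four cubic moments $\int K^3,\int K'^3,\int xK^3,\int xK'^3$ are pairwise related by equation (\ref{k3}). I would then bring in the $E$-containing integrands using the first two entries of the three-integral table: $\int(K^3-3K^2E)\,\dx=0$ yields $\int K^3\,\dx=3\int K^2E\,\dx$, and symmetrically $\int K'^3\,\dx=3\int K'^2E'\,\dx$. For the integrals with denominator $1+x$, I would substitute $u=(1-x)/(1+x)$ together with (\ref{k1}) and (\ref{e1}); the Jacobian $2/(1+u)^2\,\mathrm{d}u$ combines with the quadratic transforms to produce exactly a $1/(1+u)$ factor in the new integrand, and after simplification one gets the identity $\int K^2E/(1+x)\,\dx=\tfrac49\int K^3\,\dx$ already flagged in the text. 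Finally, the integrand $xK^2E/(1+x)$ is reached by the identity
\[
\frac{\mathrm{d}}{\dx}\bigl[x^2(1-x)K(x)^3\bigr]=-xK(x)^3+\frac{3x}{1+x}K(x)^2E(x),
\]
which follows from the standard $\mathrm{d}K/\dx=(E-(1-x^2)K)/(x(1-x^2))$; integrating from $0$ to $1$ (the boundary term vanishes because $x^2(1-x)$ kills the logarithmic divergence of $K^3$ at the endpoints) yields $\int xK^2E/(1+x)\,\dx=\tfrac13\int xK^3\,\dx$, which closes Group 1.

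For Group 2, I would make the nested quadratic-transform argument sketched just above the theorem fully explicit. Starting from $\int K(x)^2K'(x)\,\dx$, the substitution $x\mapsto(1-x)/(1+x)$ together with $K((1-x)/(1+x))=\tfrac{1+x}{2}K'(x)$ and $K'((1-x)/(1+x))=(1+x)K(x)$ gives
\[
\int_0^1 K^2K'\,\dx=\tfrac12\int_0^1 KK'^2\,\dx+\tfrac12\int_0^1 xKK'^2\,\dx.
\]
A further application of $x\mapsto 2\sqrt{x}/(1+x)$ to an appropriate piece, using (\ref{k2}), produces a second independent linear relation among $\int K^2K'$, $\int KK'^2$ and $\int xKK'^2$. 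Solving the resulting $2\times 2$ system gives both $\int K^2K'\,\dx=\tfrac23\int KK'^2\,\dx$ and $\int xKK'^2\,\dx=\tfrac13\int KK'^2\,\dx$. The remaining equality $\int xK^2K'\,\dx=\int xKK'^2\,\dx$ is then immediate from (\ref{gen}) with $n=0$, $a=1$, $c=2$.

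The main technical obstacle is precisely this second equation for Group 2: one must track how $K$ and $K'$ swap roles through two successive substitutions while keeping all the polynomial weights $(1\pm x)$ and the Jacobians straight, and then verify that the resulting relation is genuinely independent from the first — if it were a scalar multiple of the first, the system would not determine $\int xKK'^2$. Everything else reduces to one-step applications of (\ref{k3}), a single table entry, or (\ref{gen}), and the book-keeping there is routine.
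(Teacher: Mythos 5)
Your proposal is correct and follows essentially the same route as the paper: chaining (\ref{k3}), the table entries from differentiating $K^3$ and $K'^3$, the substitution $x\mapsto(1-x)/(1+x)$ with the quadratic transforms for the $1/(1+x)$ integrands, the derivative of $x^2(1-x)K(x)^3$ for the last member of the first group, and the two successive quadratic-transform substitutions (plus the $x\mapsto x'$ symmetry (\ref{gen})) for the second group. The explicit derivative identity and the $2\times2$ system you set up are exactly the computations the paper leaves implicit, and both check out.
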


We cannot yet, however, show that the two groups themselves are related by a rational factor, though it is true numerically to extremely high precision. In fact, the Inverse Symbolic Calculator gives the remarkable evaluation:

\begin{conj} \label{conj2}
\begin{equation} \int_0^1 K'(x)^3 \, \dx \stackrel{?[2]}{=} 2 K\left(\frac{1}{\sqrt2}\right)^4 = \frac{\Gamma(1/4)^8}{128\pi^2}. \end{equation}
\end{conj}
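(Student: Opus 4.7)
The plan is to attack Conjecture~\ref{conj2} via the Fourier series machinery of Section~\ref{fs}. The substitution $x = \cos t$ converts the integral to
\[
\int_0^1 K'(x)^3\,\dx \;=\; \int_0^{\pi/2} K(\sin t)^3 \sin t \, \mathrm{d}t \;=\; \tfrac{1}{2}\int_0^\pi K(\sin t)^3 \sin t \, \mathrm{d}t,
\]
the second equality by the symmetry $t \mapsto \pi - t$. Substituting the expansion~(\ref{fourier}) with $c_n := \Gamma(n + \tfrac{1}{2})^2/\Gamma(n+1)^2$, expanding the triple product via
\[
\sin A \sin B \sin C \;=\; \tfrac{1}{4}\bigl[\sin(A+B-C) + \sin(B+C-A) + \sin(C+A-B) - \sin(A+B+C)\bigr],
\]
and integrating against $\sin t$ using $\int_0^\pi \sin(mt)\sin t\,\mathrm{d}t = \tfrac{\pi}{2}\delta_{m,1}$, the only non-vanishing contributions come from triples $(a,b,c)$ satisfying $a+b=c$, $b+c=a$, or $c+a=b$ (the final $\sin(A+B+C)$ term never contributes, since $A+B+C\ge 3$). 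By threefold symmetry,
\[
\int_0^1 K'(x)^3\,\dx \;=\; \frac{3\pi}{16}\sum_{a,b \ge 0} c_a\, c_b\, c_{a+b}.
\]

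Likewise, applying~(\ref{fourier}) termwise to the Section~\ref{fs} identity $K(1/\sqrt{2})^2 = \int_0^{\pi/2}K(\sin t)\,\mathrm{d}t$ yields $K(1/\sqrt{2})^2 = \sum_{n\ge 0} c_n/(4n+1)$. Thus Conjecture~\ref{conj2} is equivalent to the triple-convolution identity
\[
\sum_{a,b\ge 0} c_a\, c_b\, c_{a+b} \;\stackrel{?}{=}\; \frac{32}{3\pi}\left(\sum_{n\ge 0}\frac{c_n}{4n+1}\right)^{\!\!2}.
\]

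To attack this I would combine the Wallis-type formula $c_n = \tfrac{4}{\pi}\int_0^{\pi/2}\int_0^{\pi/2}(\sin\alpha\sin\beta)^{2n}\,\mathrm{d}\alpha\,\mathrm{d}\beta$ with the generating function $\sum_n c_n y^n = 2K(\sqrt{y})$. Swapping sum and integral recasts the triple sum as
\[
\sum_{a,b\ge 0} c_a\, c_b\, c_{a+b} \;=\; \frac{16}{\pi}\int_0^{\pi/2}\!\int_0^{\pi/2} K(\sin\alpha \sin\beta)^2 \, \mathrm{d}\alpha \, \mathrm{d}\beta,
\]
reducing the conjecture to the two-dimensional closed-form evaluation
\[
\int_0^{\pi/2}\!\int_0^{\pi/2} K(\sin\alpha \sin\beta)^2 \, \mathrm{d}\alpha \, \mathrm{d}\beta \;\stackrel{?}{=}\; \tfrac{2}{3}\, K(1/\sqrt{2})^4.
\]
This last evaluation is the main obstacle. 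Heuristically, $K(1/\sqrt{2})^4$ is essentially the value at the CM point $\tau = i$ of a weight-$4$ modular form for $\Gamma_0(2)$, so the identity ought to follow from recognizing the 2D integral above as a period of that same form. A more concrete alternative is to insert Clausen's formula $K(k)^2 = \tfrac{\pi^2}{4}\,_3F_2\!\left({{1/2,1/2,1/2}\atop{1,1}}\bigg|\,4k^2(1-k^2)\right)$ under the integral and try to reduce the ensuing Kamp\'e de F\'eriet-type double sum to an explicit $\,_pF_q$ at argument $1$; however, the resulting series is neither well-poised nor Saalsch\"utzian, putting it beyond the essentially elementary summation theorems used elsewhere in the paper --- which is presumably why the identity remains conjectural.
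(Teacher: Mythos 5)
This statement is a \emph{conjecture} in the paper: no proof is given there, only an equivalent reformulation as a double sum of ${}_4F_3$'s obtained from Theorem~\ref{equiv} together with (\ref{kk1}) and (\ref{altf}). Your proposal likewise does not prove it. Your Fourier-series manipulations are essentially correct: with $c_n=\Gamma(n+\tfrac12)^2/\Gamma(n+1)^2$, the identity
\[
\int_0^1 K'(x)^3\,\dx \;=\; \frac{3\pi}{16}\sum_{a,b\ge0}c_a c_b c_{a+b}
\]
is consistent with the paper's own equivalent form (the paper's sum $\sum_n \Gamma(n+\tfrac12)^4\Gamma(n+1)^{-4}\,{}_4F_3(\cdots)$ equals $\pi^{-1}\sum_{a,b}c_ac_bc_{a+b}$, and both routes give $\sum_{a,b}c_ac_bc_{a+b}=\Gamma(1/4)^8/(24\pi^3)$ as the conjectured value). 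So you have rederived, via Parseval-type term-by-term integration, the same reformulation the paper reaches through the generating function (\ref{altf}) of the moments of $KK'$; the further rewriting as $\tfrac{16}{\pi}\int_0^{\pi/2}\!\int_0^{\pi/2}K(\sin\alpha\sin\beta)^2\,\mathrm{d}\alpha\,\mathrm{d}\beta$ is a genuinely new equivalent form not in the paper. One technical point you should address even for the equivalences: $c_n\sim 1/n$, so (\ref{fourier}) is only conditionally convergent, and cubing it and integrating termwise needs justification (e.g.\ work with $K(r\sin t)$ and let $r\to1^-$).

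The genuine gap is the final step. The evaluation $\int_0^{\pi/2}\!\int_0^{\pi/2}K(\sin\alpha\sin\beta)^2\,\mathrm{d}\alpha\,\mathrm{d}\beta=\tfrac23K(1/\sqrt2)^4$ is exactly as open as the original conjecture --- it \emph{is} the conjecture in different clothes --- and your argument for it is a heuristic, not a proof: to make the modular-forms idea work you would have to exhibit the double integral as a period of an explicit weight-$4$ form and then invoke a Chowla--Selberg-type evaluation at $\tau=i$, none of which is carried out; the Clausen-formula alternative is, as you note yourself, not summable by the classical theorems. So the proposal is a chain of correct equivalences terminating in an unproved identity, and the conjecture remains unestablished, just as it does in the paper.
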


Once proven, this also gives explicit closed forms for the integrals of $E'(x)K'(x)K(x)$, $E(x)K'(x)K(x)$, and $E'(x)K(x)^2$, for we can relate each of these to one of the above and a constant (by the results of Section \ref{lege}).

In view of Theorem \ref{equiv}, equations (\ref{kk1}) and (\ref{altf}), interchanging the order of summation and integration gives an equivalent form of Conjecture \ref{conj2}:
\begin{eqnarray}
& & \sum_{n=0}^\infty \frac{8}{(2n+1)^2} \, _4F_3\left({{\frac12,\frac12,n+1,n+1}\atop{1,n+\frac32,n+\frac32}}\bigg|1\right) \nonumber \\
& = & \sum_{n=0}^\infty \frac{\Gamma(n+1/2)^4}{\Gamma(n+1)^4} \, _4F_3\left({{\frac12,\frac12,-n,-n}\atop{1,\frac12-n,\frac12-n}}\bigg|1\right) \stackrel{?[2]}{=} \frac{\Gamma(1/4)^8}{24\pi^4}. 
\end{eqnarray}

\subsection{Products of four elliptic integrals and conclusion}

If we take the derivative for $K(x)^4$, use the integral (\ref{quad}) connecting $K'(x)^4$ and $K(x)^4$, plus a quadratic transform, then we obtain
\begin{equation} \int_0^1 24E(x)K(x)^3 - 8 K(x)^4 - K'(x)^4 \, \dx = 0, \end{equation}
which is one of the first non-trivial identities in Bailey's table for $k=4$. Many more tabulated relations for products of three and four elliptic integrals can be proven, albeit the complexity of the proofs increase. As perceptively noted in \cite{combat}, 
\begin{quotation} ``[it] seems to be more and more the case as experimental computational tools improve, our ability to discover outstrips our ability to prove.''
\end{quotation}

\bigskip
\paragraph{Acknowledgements.} The author wishes to thank David H. Bailey for providing extensive tables showing relations of various integrals, and J. M. Borwein and W. Zudilin for many helpful comments.

\end{document}